\documentclass[11pt,leqno]{amsart}

\usepackage{amsmath}
\usepackage{amssymb}
\usepackage{a4wide}
\usepackage{soul}
\usepackage{graphicx}
\usepackage{graphics}
\usepackage{epsfig}
\usepackage{appendix}

\parskip = 0.05 in

\newcommand{\Subsection}[1]{\subsection{ #1} ${}^{}$}



\newtheorem{theorem}{Theorem}[section]
\newtheorem{lemma}[theorem]{Lemma}
\newtheorem{proposition}[theorem]{Proposition}
\newtheorem{definition}[theorem]{Definition}

\newcounter{hypo}

\newenvironment{hyp}{
 \begin{enumerate}
\setcounter{enumi}{\value{hypo}} \item}{\stepcounter{hypo} \end{enumerate}}




\makeatletter
 \@addtoreset{equation}{section}
 \makeatother
 

\title[SSF for perturbed periodic Schr\"odinger operators]
{Spectral shift function for perturbed periodic Schr\"odinger
operators.\\ \textrm{The large-coupling constant limit case}.}

\author[M. Dimassi]{Mouez Dimassi}
\address{Mouez Dimassi, LAGA, (UMR CNRS 7539), Univ. Paris 13, F-93430 Villetaneuse, France}
\email{dimassi@math.univ-paris13.fr}
\author[M. Zerzeri]{Maher Zerzeri}
\address{Maher Zerzeri, LAGA, (UMR CNRS 7539), Univ. Paris 13, F-93430 Villetaneuse, France}
\email{zerzeri@math.univ-paris13.fr}

\keywords{Periodic Schr\"odinger operator, spectral shift function, asymptotic expansions,\\
limiting absorption theorem}
\subjclass[2000]{81Q10 (35P20 47A55 47N50 81Q15)}


\begin{document}

\begin{abstract}
In the large coupling constant limit, we obtain  an asymptotic
expansion in powers of $\mu^{-\frac{1}{\delta}}$ of the derivative
of the spectral shift function corresponding to the pair
$\big(P_\mu=P_0+\mu W(x),P_0=-\Delta+V(x)\big),$ where $W(x)$ is
positive, $W(x)\sim w_0(\frac{x}{|x|})|x|^{-\delta}$ near infinity
for some $\delta>n$ and $w_0\in {\mathcal C}^\infty(\mathbb
S^{n-1};\,\mathbb R_+).$ Here $\mathbb S^{n-1}$ is the unite sphere
of the space $\mathbb R^n$ and $\mu$ is a large parameter. The
potential $V$ is real-valued, smooth and periodic with respect to a
lattice $\Gamma$ in ${\mathbb R}^n$.
\end{abstract}

\maketitle

\setcounter{tocdepth}{2}
\tableofcontents

\vfill\break
\section{Introduction}
Consider the perturbed periodic Schr\"odinger operator
\begin{equation}
P_{\mu}=P_0+\mu W(x),\quad \mu>0,
\end{equation}
$$
P_0=-\Delta_x+V(x).
$$
Here $V$ is a real-valued, ${\mathcal C}^\infty$  function and
periodic with respect to a lattice $\Gamma$ of $\mathbb R^n$. We
assume  that $W\in {\mathcal C}^\infty({\mathbb R}^n; {\mathbb R})$
and satisfies the following estimate: for all $\alpha\in \mathbb{N}^n,$ there exists $C_\alpha>0$ such that
\begin{equation}\label{as1}
\vert \partial^\alpha_x W(x)\vert \leq C_\alpha(1+\vert x\vert)^{-\delta-|\alpha|},\quad \forall x\in \mathbb R^n,\quad\text{with} \  {\delta>n}.
\end{equation}

The operators $P_0, P_\mu$ are self-adjoint on $H^2({\mathbb R}^n)$.
Under the assumption (\ref{as1}) we show in Theorem \ref{w-SSFmu} below
that the operator $\big[f(P_\mu)-f(P_0)\big]$ belongs to the trace class for all
$f\in {\mathcal C}^\infty_0(\mathbb R)$. Following the general setup we
define the spectral shift function, SSF,
$\xi_\mu(\lambda):=\xi(\lambda;P_\mu,P_0)$ related to the pair $(P_\mu,P_0)$ by
\begin{equation}
{\rm tr}\big[f(P_\mu)-f(P_0)\big]=-\langle \xi_\mu'(\cdot),
f(\cdot)\rangle=\int_{\mathbb R} \xi_\mu(\lambda) f'(\lambda)
d\lambda,\quad \forall f\in {\mathcal C}^\infty_0(\mathbb R).
\end{equation}
By this formula $\xi_\mu$ is defined modulo a constant but for the
analysis of the derivative $\xi'_\mu(\lambda)$ this is not
important.

The notion of SSF was first singled out by the outstanding
theoretical physicist I-M.Lifshits in his investigations in the
solid state theory, in 1952, see \cite{Li52_01}. It was brought
into mathematical use in M-G. Kre\u{\i}n's famous paper
\cite{Kr53_01}, where the precise statement of the problem has
given and explicit representation of the SSF in term of the
perturbation determinant was obtained. The work of M-G.
Kre\u{\i}n's on the SSF has been described in detail in
\cite{BiYa93_01}. Background information on the SSF theory can
be found in \cite{Ro99_01} and  \cite[Chapter 8]{Ya00_01}.

In the case where $V=0$, the asymptotic behavior of the SSF of the
Schr\"odinger operator has been intensively studied in different
aspects (see \cite{Co81_01,Gu85_01,MaRa78_01,PePo82_01,Ro92_01,Ro94_01,RoTa87_01}
and the references given there).

In the semi-classical regime (i.e. $H(h)=-h^2\Delta_x+W(x),
(h\searrow 0))$ a Weyl type asymptotics of
$\xi_h(\lambda)=\xi(\lambda; H(h), -h^2\Delta)$ with sharp remainder
estimate has been obtained (see \cite{Ro92_01,Ro94_01,RoTa87_01,RoTa88_01}).
On the other hand, if an energy
$\lambda>0$ is non-trapping for the classical hamiltonian
$p(x,\zeta)=\vert \zeta\vert^2+W(x)$ (i.e. for all $(x,\zeta)\in
p^{-1}\{\lambda\}$, $\vert {\rm exp}(tH_p(x,\zeta)\vert \rightarrow
\infty$ when $t\rightarrow \infty$) a complete asymptotic
expansion in powers of $h$  of $\xi'_h(\lambda)$ has been
obtained (see \cite{Ro92_01,Ro94_01,RoTa87_01,RoTa88_01}).
Similar results are well-known for the SSF at
high energy (see \cite{Bu71_01,Co81_01,PePo82_01,Po82_01,Ro91_01}).

In the large coupling constant limit, the asymptotic behavior of
$\xi_\mu(\lambda):=\xi(\lambda; -\Delta+\mu
W, -\Delta)$ depends both on the sign of
the perturbation $W$ and on its decay properties at infinity.
For the case of non-negative perturbation $W\geq 0$ satisfying
$W(x)\sim w_0(\frac{x}{|x|})|x|^{-\delta}$ near infinity for some
$\delta>n$, it has been proved in \cite{PuRu02_01} (see also \cite{PuRu02_02}) that
$$
\xi_\mu(\lambda)=\mu^{\frac{n}{\delta}}\big(b_0+o(1)\big),\quad \mu\rightarrow +\infty,
$$
$$
b_0=(2\pi)^{-n}\kappa_0\int_{\mathbb R^n}\Big((\lambda)^{\frac{n}{2}}_+-
(\lambda-w_0(\frac{x}{|x|})|x|^{-\delta})^{\frac{n}{2}}_+\Big)\,dx,
$$
where $\kappa_0={\rm vol}(\{x\in \mathbb R^n;\ |x|<1\})$ and $(\lambda)_+=\max(\lambda,0)$.

Under the assumption that $\omega_0>0$ on $\mathbb S^{n-1}$ a complete
asymptotic expansion in powers of $\mu^{-\frac{1}{\delta}}$ is obtained in
\cite{Di06_01}.

In the literature there are a lot of works concerning periodic
Schr\"odinger operator with perturbations see \cite{AlDeHe89_01, BiYa95_01,
Bu87_01, Di98_01, Di02_01, Di05_01, DiZe03_01, Ge90_01,
GeMaSj91_01, GeNi98_01, GuRaTr88_01, HoSpTe01_01, Sl49_01} but there are
only few ones dealing  with the spectral shift function, see \cite{BiYa95_01}, \cite{Di05_01} and
also \cite{GeNi98_01}.

It should be mentioned that the tools in \cite{Di05_01}  are related to the
asymptotic behavior of $\xi(\lambda;P_0+W(hx),P_0),\ h\searrow 0$.

To our best knowledge there are no works treating the large
coupling constant limit in the case where $V\not=0$.
The goal of this work is to generalize
the results of \cite{Di06_01} to the perturbed periodic Schr\"odinger
operator $P_\mu=P_0+\mu W(x)$.

{\sl The paper is organized as follows}: In the next section, we
recall some well-known results concerning the spectra of a periodic
Schr\"odinger operator (Subsection \ref{premi}) and we state the
assumptions and the results precisely (Subsection \ref{main}).
We give an outline of the proofs in Subsection
\ref{outproof}. Section \ref{proofs} is devoted to the proofs.
In Subsection \ref{semiRO} we built a semiclassical reference operator, denoted by
$Q:=H(\mu^{-\frac{1}{\delta}}),$
that we use in all the rest of the paper.
The proof of the weak asymptotic expansion of $\xi'_\mu$ is given
in Subsection \ref{pw-SSFmu}.
At last, The pointwise asymptotic expansion of $\xi'_\mu$
is proved in Subsection \ref{p-dSSFmu}.

\section{Statements}

\Subsection{Preliminaries}\label{premi}

Let $\Gamma=\underset{i=1}{\overset{n}{\oplus}}{\mathbb Z}e_i$ be a lattice generated
by some basis $(e_1,e_2,\cdots,e_n)$ of ${\mathbb R}^n.$
The dual lattice $\Gamma^*$ is given by
$\Gamma^*:=\{\gamma^*\in {\mathbb R}^n;\
\langle \gamma | \gamma^* \rangle \in 2\pi{\mathbb Z},\
\forall \gamma\in \Gamma\}.$
A fundamental domain of $\Gamma$ (resp. $\Gamma^*$)
is denoted by $E$ (resp. $E^*$). If we identify opposite edges
of $E$ (resp. $E^*$) then it becomes a flat torus
denoted by $\displaystyle {\mathbb T}={\mathbb R}^n/\Gamma$ (resp.
$\displaystyle {\mathbb T}^*={\mathbb R}^n/\Gamma^*).$

Let $V$ be a real-valued potential, ${\mathcal C}^\infty$ and
$\Gamma-$periodic. For $k\in {\mathbb R}^n,$ we define the
operator $P(k)$ on $L^ 2({\mathbb T})$ by
$P(k):=(D_{y}+k)^2+V(y).$ The operator $P(k)$ is a
semi-bounded self-adjoint with $k$-independent domain
$H^ 2({\mathbb T}).$ Since the resolvent of $P(k)$ is compact,
$P(k)$ has a complete set of (normalized) eigenfunctions
$\Phi_{n}(\cdot,k)\in H^{2}({\mathbb T}),\ n\in {\mathbb N},$
called Bloch functions. The corresponding eigenvalues accumulate at
infinity and we enumerate them according to their multiplicities,
$\lambda_{1}(k)\leq \lambda_{2}(k)\leq \cdots .$ The operator
$P(k)$ satisfies the identity $e^{-iy\cdot
\gamma^*}P(k)e^{iy\cdot \gamma^*}=P(k+\gamma^*),\ \forall
\gamma^*\in \Gamma^*,$ then for every $p\geq 1,$ the function
$k\mapsto \lambda_{p}(k)$ is $\Gamma^{*}-$periodic.

Ordinary perturbation theory shows that $\lambda_{p}(k)$ are
continuous functions of $k$ for any fixed $p,$ and
$\lambda_{p}(k)$ is even an analytic function of $k$ near any
point $k_{0}\in  {\mathbb T}^{*}$ where $\lambda_{p}(k_{0})$ is
a simple eigenvalue of $P(k_{0}).$ The function $\lambda_{p}(k)$
is called the band function and the closed intervals
$\Lambda_{p}:=\lambda_{p}({\mathbb T}^*)$ are called bands. See
\cite{ReSi78_01}, \cite{Sj91_01} and also \cite{Sk85_01, Sk85_02}.

Consider the self-adjoint operator on $L^{2}({\mathbb R}^n)$
with domain $H^{2}({\mathbb R}^n)$:
\begin{equation}\label{freeoperator}
P_{0}=-\Delta_x +V(x),\quad \text{where}\ \Delta_x=\sum_{j=1}^n\frac{\partial^2}{\partial x_j^2}.
\end{equation}

The spectrum of $P_0$ is absolutely continuous
(see \cite{Th73_01}) and consists of the bands $\Lambda_{p},\ p=1,2,\cdots$.
Indeed,
$\displaystyle
\sigma(P_{0})=\sigma_{\rm ac}(P_{0})=\underset{p\geq
1}{\cup}\Lambda_{p}.$ See also \cite{Sh79_01}.

\begin{definition}\label{Fermisurface}
Let $\lambda\in {\mathbb R}$ and $F(\lambda)=\big\{k\in {\mathbb
T}^*;\ \lambda\in \sigma\big(P(k)\big)\big\}$ the corresponding
Fermi-surface.
\begin{itemize}
\item[a)] We will say that $\lambda\in \sigma(P_{0})$ is a simple energy level
if and only if $\lambda$ is a simple eigenvalue of $P(k),$ for
every  $k\in F(\lambda).$

\item[b)] Assume that $\lambda$ is a simple energy level of $P_{0}$
and let $\lambda(k)$ be the unique eigenvalue defined on a
neighborhood of $F(\lambda)$ such that $\lambda(k)=\lambda, \
\forall k\in F(\lambda).$ We say that $\lambda$ is a
non-critical energy  of $P_0$ if $d_{k}\lambda(k)\not=0$ for
all $k\in F(\lambda).$
\end{itemize}
\end{definition}

Note that in one dimension case $F(\lambda)$ is just a finite set of points.

Now, let us recall some well-known facts about the density of states
associated with $P_0,$ see \cite{Sh79_01}.
The density of states measure $\rho$
is defined as follows:
\begin{equation}
\rho(\lambda):=\frac{1}{(2\pi)^n}\sum_{p\geq 1}
\int_{\{k\in E^*;\ \lambda_p(k)\leq \lambda\}}\, dk.
\end{equation}

Since the spectrum of $P_{0}$ is absolutely continuous,
the measure $\rho$ is absolutely continuous with respect to
the Lebesgue measure $d\lambda.$ Therefore
the density of states, $\frac{d\rho}{dE}(E),$ of $P_{0}$
is locally integrable.

\Subsection{Results}\label{main}

Now, we introduce our perturbed periodic Schr\"odinger operator precisely:
\begin{equation}
P_\mu:=P_0+\mu W(x),\quad \mu>0,
\end{equation}
where $P_0$ is a periodic Schr\"odinger operator given in \eqref{freeoperator}
and $W\in {\mathcal C}^\infty(\mathbb R^n;\mathbb R)$.
Assume that:
\begin{hyp}\label{mu1}
 $W$ is strictly positive
\end{hyp}
and satisfying the following condition:
\begin{hyp}\label{mu2}
There exists a sequence $(w_j)_{j\geq 0}\subset {\mathcal C}^\infty(\mathbb S^{n-1};\mathbb R)$ such that
for all integer $N,$ there exists $R_N(x)\in {\mathcal C}^\infty(\mathbb R^n;\mathbb R)$ s.t.

\begin{center}
$\displaystyle W(x)=\sum_{j=0}^Nw_j\big(\frac{x}{|x|}\big)|x|^{-\delta-j}+R_N(x),$
for all $x,\ |x|\geq 1,$
\end{center}
where \fbox{${\delta>n}$} and for all $\alpha\in \mathbb N^n,$ there exists $C_\alpha>0$ such that

\begin{center}
$|\partial_x^\alpha R_N(x)|\leq C_\alpha (1+|x|)^{-\delta-N-1-|\alpha|}.$
\end{center}
\end{hyp}

\begin{hyp}\label{mu3}
Assume also that $w_0>0.$
\end{hyp}

The operator $P_\mu$ is self-adjoint, semi-bounded on $L^2(\mathbb
R^n)$ with domain $H^2(\mathbb R^n).$
\\
The assumption \ref{mu2} implies that $W$ goes to zero at infinity then
by perturbation theory (Weyl theorem) yields:
\begin{equation}
\sigma_{\rm ess}\big(P_\mu\big)=\sigma_{\rm
ess}(P_0)=\sigma(P_0)=\bigcup_{p\geq 1}\Lambda_p.
\end{equation}

Recall that $\sigma_{\rm ess}(A),$ the essential spectrum of $A,$ is
defined by $\sigma_{\rm ess}(A)=\sigma(A)\setminus \sigma_{\rm
disc}(A),$ where $\sigma_{\rm disc}(A)$  is the set of isolated
eigenvalues of $A$ with finite multiplicity. Here $A$ is an
unbounded operator on a Hilbert space.

Our first theorem concerns the weak asymptotic of
$\xi'_\mu(\lambda).$
\begin{theorem}[Weak asymptotic]\label{w-SSFmu}
Let $I$ be a bounded open interval on $\mathbb R.$ Assume that $W$
satisfies \ref{mu1}, \ref{mu2} and \ref{mu3}. For $f\in
{\mathcal C}_0^\infty(I),$ the operator $\Big[f(P_\mu)-f(P_{0})\Big]$ is of
trace class and
\begin{equation}\label{a1}
-\langle\xi'_\mu,f\rangle:=
{\rm tr}\Big[f(P_\mu)-f(P_{0})\Big]\sim
\mu^{\frac{n}{\delta}}\sum_{j=0}^{+\infty}a_{j}(f)\mu^{-\frac{j}{\delta}},
\quad \text{when}\ \mu\uparrow +\infty,
\end{equation}
with
\begin{equation}\label{a2}
a_{0}(f)=(2\pi)^{-n}\sum_{p\geq 1}\int_{{\mathbb R}^n_x}\int_ {E^*}
\Big[f\big(\lambda_p(k)+w_0(\frac{x}{|x|})|x|^{-\delta}\big)-
f\big(\lambda_p(k)\big)\Big]\,dk\,dx
\end{equation}
and
\begin{equation}
a_{1}(f)=(2\pi)^{-n}\sum_{p\geq 1}\int_{{\mathbb R}^n_x}\int_ {E^*}
f'\Big(\lambda_p(k)+w_0(\frac{x}{|x|})|x|^{-\delta}\Big)w_1\Big(\frac{x}
{|x|}\Big)|x|^{-\delta-1}\,dk\,dx.
\end{equation}

The coefficients $a_j(f)$ are distributions on $f.$

Moreover, if $\lambda$ is a non-critical energy of $P_0$ for all
$\lambda\in I,$ then $a_j(f)=-\langle \gamma_j(\cdot),f\rangle,$ for
all $f\in {\mathcal C}_0^\infty(I).$ Here $\gamma_j(\lambda)$ are smooth
functions of $\lambda\in I$. In particular,
\begin{equation}\label{gam0}
\gamma_0(\lambda)={\frac{\rm d}{{\rm d}\lambda}}\left[\int_{\mathbb
R^n_x}\Big\{\rho\big(\lambda\big)-\rho\Big(\lambda-w_0\big(\frac{x}
{|x|}\big)|x|^{-\delta}\Big)\Big\} \, dx\right]
\end{equation}
and
\begin{equation}\label{gam1}
\gamma_1(\lambda)={\frac{{\rm d}^2}{{\rm d}\lambda^2}}\left[
\int_{\mathbb R^n_x}\rho\Big(\lambda-w_0\big(\frac{x}
{|x|}\big)|x|^{-\delta}\Big)w_1\big(\frac{x}
{|x|}\big)|x|^{-\delta-1}\, dx\right].
\end{equation}
\end{theorem}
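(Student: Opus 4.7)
The plan is to convert the large-coupling limit into a semiclassical one by a dilation, and then apply a Helffer--Sj\"ostrand trace formula together with the semiclassical pseudodifferential calculus for perturbed periodic Schr\"odinger operators developed in \cite{GeMaSj91_01, DiZe03_01, Di05_01}. Set $h:=\mu^{-1/\delta}$, so that $h\downarrow 0$ as $\mu\uparrow+\infty$, and introduce the unitary dilation $(U_h u)(y)=h^{n/2}u(hy)$. A direct computation using \ref{mu1}--\ref{mu3} gives
\[
U_h^{*}P_\mu U_h=-h^{2}\Delta_y+V(y/h)+\widetilde W_h(y),\qquad U_h^{*}P_0 U_h=-h^{2}\Delta_y+V(y/h),
\]
where for $|y|\geq h$ one has $\widetilde W_h(y)=\sum_{j=0}^{N}h^{j}w_j(y/|y|)|y|^{-\delta-j}+h^{N+1}r_N(y,h)$ with $r_N$ bounded by $(1+|y|)^{-\delta-N-1}$ uniformly in $h$. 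Denoting $Q_h:=U_h^{*}P_\mu U_h$ and $Q_{h,0}:=U_h^{*}P_0 U_h$, unitarity of $U_h$ gives $\mathrm{tr}[f(P_\mu)-f(P_0)]=\mathrm{tr}[f(Q_h)-f(Q_{h,0})]$, so the task reduces to a semiclassical trace expansion for a slowly varying perturbation of the fibered operator $Q_{h,0}$ with decay $|y|^{-\delta}$ at infinity.

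For the trace-class property and the trace expansion itself, I would apply the Helffer--Sj\"ostrand formula
\[
f(Q_h)-f(Q_{h,0})=-\frac{1}{\pi}\int\bar\partial\widetilde f(z)\,(Q_h-z)^{-1}\widetilde W_h\,(Q_{h,0}-z)^{-1}\,L(dz),
\]
with $\widetilde f$ an almost-analytic extension of $f\in\mathcal C_0^\infty(I)$. The decay $\widetilde W_h(y)=O((1+|y|)^{-\delta})$ with $\delta>n$, combined with standard Schatten-ideal estimates for $(1+|y|)^{-\delta}(-h^{2}\Delta+C)^{-m}$, yields trace class. For the expansion, I plug the expansion of $\widetilde W_h$ into the right-hand side, use cyclicity of the trace and the Floquet--Bloch fibration of $Q_{h,0}$ (whose symbol on the $p$-th band is $\lambda_p(k)$), and invoke semiclassical stationary phase to produce an effective Weyl symbol $\lambda_p(k)+W_0(y)$ on the slow variables $(y,k)\in\mathbb R^{n}\times E^{*}$. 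Integrating this symbol against $f$ gives \eqref{a2}; the $h$-correction picked up from the term $h\,w_1(y/|y|)|y|^{-\delta-1}$ of $\widetilde W_h$ gives $a_1(f)$, and the successive orders give the full series \eqref{a1}.

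For the smoothness claim on non-critical intervals, the band functions $\lambda_p(k)$ have non-vanishing gradient on each relevant Fermi surface $F(\lambda)$ with $\lambda\in I$, so $\rho$ is smooth on $I$ and the pairing $(2\pi)^{-n}\sum_p\int_{E^*}g(\lambda_p(k))\,dk=\int g(\lambda)\rho'(\lambda)\,d\lambda$ holds for $g$ supported in $I$. Applying this to $g(\cdot)=f(\cdot+W_0(x))-f(\cdot)$ with $W_0(x)=w_0(x/|x|)|x|^{-\delta}$, and using Fubini (justified since $|W_0(x)|\lesssim|x|^{-\delta}$ with $\delta>n$), I rewrite
\[
a_0(f)=\int f(\lambda)\Bigl[\int_{\mathbb R^n}\bigl(\rho'(\lambda-W_0(x))-\rho'(\lambda)\bigr)\,dx\Bigr]\,d\lambda.
\]
An integration by parts in $\lambda$ then identifies $a_0(f)=-\langle\gamma_0,f\rangle$ with $\gamma_0$ as in \eqref{gam0}; the same argument applied to the $j=1$ term of $\widetilde W_h$ gives \eqref{gam1}.

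The principal difficulty I anticipate is the semiclassical trace expansion itself: the periodic background $V(y/h)$ oscillates on scale $h$ while $\widetilde W_h(y)$ varies on scale $1$, so the two-scale pseudodifferential calculus of \cite{GeMaSj91_01} has to be implemented carefully in order to extract the effective symbol $\lambda_p(k)+W_0(y)$ band by band, and the resulting $O(h^{N+1})$ remainders must be controlled uniformly in $h$ on the compact spectral window supporting $f$.
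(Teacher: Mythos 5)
Your dilation reformulation is fine as far as it goes, and it is morally equivalent to the paper's reduction: $U_h^*QU_h$ with $Q=P_0+\varphi(hx,h)$ is exactly the operator $-h^2\Delta_y+V(y/h)+\varphi(y,h)$, so the two frames differ only by a unitary. The weak-expansion machinery you then want to invoke (Helffer--Sj\"ostrand plus the two-scale calculus of \cite{GeMaSj91_01}) is also what the paper ultimately relies on, through a direct citation of Theorem~1.3 of \cite{DiZe03_01}, and your derivation of $\gamma_0,\gamma_1$ from $a_0,a_1$ in the non-critical case via $\sum_p\int_{E^*}g(\lambda_p(k))\,dk=(2\pi)^n\int g\,\rho'\,d\lambda$ is correct.

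The gap is that you never deal with the singularity of the rescaled perturbation near the origin, and this is precisely the point on which the paper's proof turns. After your dilation the perturbation is $\widetilde W_h(y)=h^{-\delta}W(y/h)$, and assumption {\bf (A2)} controls it only for $|y/h|\geq 1$; near $y=0$ one has $\widetilde W_h(y)\approx h^{-\delta}\max W\to\infty$, and more generally $|\partial_y^\alpha\widetilde W_h(y)|\lesssim h^{-\delta-|\alpha|}(1+|y|/h)^{-\delta-|\alpha|}$, which is $O(1)$ only for $|y|\gtrsim 1$. So $\widetilde W_h$ is \emph{not} a bounded $h$-independent symbol and is \emph{not} $O(\langle y\rangle^{-\delta})$ uniformly in $h$; both the Schatten estimate you cite and the semiclassical pseudodifferential calculus fail on the set $\{|y|\lesssim 1\}$. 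Likewise the principal symbol $\lambda_p(k)+w_0(y/|y|)|y|^{-\delta}$ you want to feed into the trace expansion blows up at $y=0$. This is exactly why the paper replaces $\mu W(x)=h^{-\delta}W(x)$ by the \emph{regularized} $\varphi(hx,h)=\bigl(1-\chi(hx)\bigr)h^{-\delta}W(x)+M\chi(hx)$ (Lemma~\ref{varphi}): $\varphi$ is uniformly bounded with all derivatives, so the operator $Q=P_0+\varphi(hx,h)$ is a legitimate slowly varying perturbation to which \cite{DiZe03_01} applies. The nontrivial step --- Proposition~\ref{resolventidentity}, Lemma~\ref{negligeabletraceoperator1}, Propositions~\ref{traceestimate} and~\ref{mainresultofthesubsection} --- is to show that the modification, which lives in the classically forbidden region $\{h^{-\delta}W>M\}$ where $\lambda\in I$ cannot be reached, changes $\mathrm{tr}[f(P_\mu)-f(P_0)]$ only by $O(h^\infty)$. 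Your proposal silently assumes this and instead flags the two-scale calculus as the main difficulty, but that part is a standard citation; the genuinely new work in the paper is the $O(h^\infty)$ comparison between $P_\mu$ and the regularized $Q$, which your argument omits.
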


The proof of this theorem is contained in Subsection \ref{pw-SSFmu}.

Our main result concerning the derivative of the spectral
shift function near the bottom of the spectrum is the following.

Let $\lambda_0=\inf\sigma(P_0)\in \mathbb R.$ Let $\lambda_1(k)$ be
the first Floquet eigenvalue. It is well-known (see
\cite{ReSi78_01}, \cite{Sk85_01, Sk85_02}) that there exists a
bounded interval $[a,b]\subset\lambda_1({\mathbb T}^{*})\subset
\sigma(P_0)$ near $\lambda_0$ such that for all
$\lambda=\lambda_1(k)\in [a,b],$ $\lambda$ is a non-critical energy
of $P_0$ and satisfies:
$$
\Delta_{k}\lambda_1(k)>0,\quad \text{for all}\ \
k\in{F}(\lambda).\leqno{\textbf{(*)}}
$$
Recall that $F(\lambda)$ is a Fermi-surface associated to $\lambda.$

\begin{theorem}[Pointwise asymptotic]\label{dSSFmu}
Fix $[a,b]$ as above $($satisfying {\bf (*)}$)$. Assume \ref{mu1}, \ref{mu2} and \ref{mu3}.
Then the following asymptotic expansion holds:
\begin{equation}\label{strongSSFmu}
\xi'_\mu(\lambda)\sim \mu^{\frac{n}{\delta}}\sum_{j\geq
0}\gamma_j(\lambda)\mu^{-\frac{j}{\delta}}\quad {\rm as}\
\mu\uparrow +\infty,
\end{equation}
uniformly for $\lambda\in [a,b].$ \\
The coefficients $\big(\gamma_j(\lambda)\big)_{j\geq 0}$ are given in Theorem \ref{w-SSFmu}.
Furthermore, this expansion has derivate in $\lambda$ to any order.
\end{theorem}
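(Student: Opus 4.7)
The plan is to work in the semiclassical regime with $h := \mu^{-1/\delta}$, exploiting the reference operator $Q=H(h)$ built in Subsection \ref{semiRO}. After the rescaling $y=hx$, the perturbation $\mu W(x)$ becomes a symbol of order zero with leading term $w_0(y/|y|)|y|^{-\delta}$; once reduced to the first band by Floquet--Bloch analysis, the effective semiclassical hamiltonian is morally $\lambda_1(k)+w_0(y/|y|)|y|^{-\delta}$. The hypothesis $(\ast)$, together with $w_0>0$ (assumption \ref{mu3}), makes this symbol non-critical on $[a,b]$ and its energy surfaces non-trapping; this is what will allow pointwise, rather than merely weak, asymptotics.

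First, I would reformulate Theorem \ref{w-SSFmu} in the form that is directly usable here. For $f\in {\mathcal C}_0^\infty(I)$ with $I$ a neighborhood of $[a,b]$, non-criticality of every $\lambda\in[a,b]$ yields
\begin{equation*}
-\langle \xi'_\mu, f\rangle \sim \mu^{n/\delta}\sum_{j\geq 0}\Bigl(-\int\gamma_j(\lambda)f(\lambda)\,d\lambda\Bigr)\mu^{-j/\delta},
\end{equation*}
with $\gamma_j\in {\mathcal C}^\infty([a,b])$ given by \eqref{gam0}--\eqref{gam1} and their higher-order analogues. The pointwise claim is thus the task of removing the test function $f$ uniformly in $\lambda\in [a,b]$.

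Second, I would apply a Tauberian-type argument. For an even mollifier $\varphi\in {\mathcal C}_0^\infty(\mathbb R)$ with $\int\varphi=1$ and $\varphi_\epsilon(\cdot)=\epsilon^{-1}\varphi(\cdot/\epsilon)$, the weak expansion applied to $f(\cdot)=\varphi_\epsilon(\cdot-\lambda)$ controls the regularization $(\varphi_\epsilon*\xi'_\mu)(\lambda)$. The difference $\xi'_\mu(\lambda)-(\varphi_\epsilon*\xi'_\mu)(\lambda)$ is estimated using the boundary-value representation
\begin{equation*}
\xi'_\mu(\lambda)=\frac{1}{\pi}\,{\rm Im}\,{\rm tr}\bigl[(P_\mu-\lambda-i0)^{-1}-(P_0-\lambda-i0)^{-1}\bigr]
\end{equation*}
(in the distributional sense) combined with the Helffer--Sjöstrand functional calculus, reducing everything to weighted trace-norm limiting absorption bounds of the form
\begin{equation*}
\bigl\|\langle x\rangle^{-s}(P_\mu-\lambda\mp i0)^{-N}\langle x\rangle^{-s}\bigr\|_{{\mathfrak S}_1}\leq C_{N,s},
\end{equation*}
uniform for $\mu\gg 1$ and $\lambda\in[a,b]$. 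Choosing $\epsilon$ as a suitable power of $h=\mu^{-1/\delta}$ then produces the pointwise expansion \eqref{strongSSFmu} with the same coefficients $\gamma_j(\lambda)$, and iterating the argument with $\partial_\lambda^\ell$ applied to the resolvent yields the claimed differentiability to any order.

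The main obstacle is the uniform, trace-class limiting absorption principle above. This is precisely where $(\ast)$ and $w_0>0$ become essential: after the rescaling, non-criticality of $\lambda_1(k)$ on $F(\lambda)$ combined with the convexity $\Delta_k\lambda_1(k)>0$ allow the construction of a Mourre conjugate operator built from the group velocity $\nabla_k\lambda_1(k)$, and the strict positivity of $w_0$ prevents any new trapping from the perturbation. Adapting the Mourre and Gérard--Martinez--Sjöstrand / Gérard--Nier machinery for periodic Schrödinger operators (cited in the introduction) to the $\mu$-dependent setting, and upgrading operator-norm estimates to trace-class ones via the decay $\delta>n$ in \ref{mu2}, is the technical heart of the argument; once established, the Robert--Tamura Tauberian step is routine and yields the theorem.
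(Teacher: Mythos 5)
Your proposal takes a genuinely different route from the paper, and there is a gap in it that needs to be addressed.

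\textbf{What the paper actually does.} The paper does not run a Tauberian argument for $P_\mu$ at all. It proves the clean reduction
\begin{equation*}
\xi'_\mu(\lambda)=\xi'_h(\lambda)+{\mathcal O}\big(h^\infty\big)\quad\text{uniformly for }\lambda\in[a,b],\ h=\mu^{-1/\delta},
\end{equation*}
where $\xi_h$ is the SSF of the pair $(Q,P_0)$, and then simply cites Theorem~2.3 of \cite{DiZe10_01} for the pointwise expansion of $\xi'_h$. The reduction is obtained from Proposition~\ref{mainresultofthesubsection} (the Stone/Green-formula representation of ${\rm tr}[f(P_\mu)-f(Q)]$ via the boundary values of ${\rm tr}\,{\mathcal Q}(z)$) together with Proposition~\ref{traceunif}, which shows ${\rm tr}\,{\mathcal Q}(z)={\mathcal O}(h^\infty)$ \emph{uniformly} on ${\mathcal U}_\pm$ up to the real axis. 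In Proposition~\ref{traceunif} the trace-class control and the limiting-absorption control are carefully decoupled: trace-norm smallness ${\mathcal O}(h^\infty)$ comes from the support separation of $\widetilde W$ and ${\mathcal G}_j$ (Lemma~\ref{negligeabletraceoperator1}), while the singular boundary-value factors $(z-P_\mu)^{-1}$, $(z-Q)^{-l}$ only enter through \emph{operator-norm} weighted bounds ${\mathcal O}(h^{-l})$ from Proposition~\ref{muabslimit2}, with the semiclassical weights $\langle hx\rangle^{-\alpha}$. Non-trapping is established by an explicit escape function $\nabla_k\lambda_1(k)\cdot x$ and a Poisson-bracket computation, invoking Lemma~3.5 of \cite{RoTa88_01}; no Mourre theory is used.

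\textbf{The gap in your argument.} The crux of your proof is the claimed uniform \emph{trace-class} limiting absorption bound
\begin{equation*}
\bigl\|\langle x\rangle^{-s}(P_\mu-\lambda\mp i0)^{-N}\langle x\rangle^{-s}\bigr\|_{{\mathfrak S}_1}\leq C_{N,s},
\end{equation*}
uniformly in $\mu\gg 1$ and $\lambda\in[a,b]$. This is not available, and is not what the problem gives you. First, the weight scale is wrong: the natural weights in this semiclassical problem are $\langle hx\rangle^{-\alpha}$, which are much weaker than $\langle x\rangle^{-s}$ for small $h$, and even in operator norm the bound deteriorates as ${\mathcal O}(h^{-l})$ (see Proposition~\ref{muabslimit2}); there is no uniform $\mu$-independent constant. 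Second, and more fundamentally, limiting absorption gives you operator-norm bounds, not trace-norm bounds; upgrading to ${\mathfrak S}_1$ requires an independent mechanism (here: the exponentially small off-diagonal kernel estimates coming from the spatial separation of supports in Lemma~\ref{negligeabletraceoperator1}, which is entirely decoupled from the resolvent near the spectrum). Your remark that ``the decay $\delta>n$ upgrades operator-norm to trace-class'' does not by itself do this: $(P_\mu-\lambda-i0)^{-N}$ is not smoothing, and the weights alone will not make it trace class uniformly. The paper never places the boundary resolvent of $P_\mu$ or $Q$ in ${\mathfrak S}_1$.

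\textbf{Secondary points.} Your Tauberian strategy (mollify by $\varphi_\epsilon$, compare to $\xi'_\mu$, optimize $\epsilon$ in powers of $h$) is a legitimate alternative — it is in spirit what the cited reference \cite{DiZe10_01} does for $\xi'_h$ — but you are then re-deriving that result from scratch rather than using it, and you would still need to quantify a priori the difference $\xi'_\mu-\varphi_\epsilon*\xi'_\mu$, which requires exactly the weighted resolvent bounds you assert. The Mourre/conjugate-operator variant of the non-trapping estimate is also fine in principle, but the paper's escape-function computation $\{\nabla\lambda_1(k)\cdot x,\lambda_1(k)+w_0(\tfrac{x}{|x|})|x|^{-\delta}\}\geq c_0>0$ is more direct and adapted to the periodic band structure. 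To make your argument correct you should (i) replace the uniform ${\mathfrak S}_1$-LAP by the actual operator-norm estimate with $h$-scaled weights and the ${\mathcal O}(h^{-l})$ growth, and (ii) obtain trace-class control from the geometry of the supports as in Lemma~\ref{negligeabletraceoperator1} rather than from the resolvent itself.
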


\Subsection{Outline of the proofs}\label{outproof}

The purpose of this section is  to provide  a broad outline of the
proofs. As we have noticed in the introduction the asymptotics like
(\ref{a1}) and (\ref{strongSSFmu}) are well-known for $\xi'(\lambda;
P_0+W(hx), P_0)$ where $h=\mu^{-\frac{1}{\delta}}\downarrow 0$ and
$W$ is a regular potential, bounded with all its derivatives (see
\cite{DiZe03_01, DiZe10_01}). Our strategy in this work will be to
show that:
\begin{equation}\label{reduc1}
\xi'_\mu(\lambda)=\xi'(\lambda;Q,P_0)+ {\mathcal O}(\mu^{-\infty}),
\end{equation}
where $Q:=H(\mu^{-\frac{1}{\delta}})=
P_0+\varphi(\mu^{-\frac{1}{\delta}}x,\mu^{-\frac{1}{\delta}})$ and
\begin{equation}\label{asy}
\varphi(x,h)=\phi_0(x)+h\phi_1(x)+\cdots,
\end{equation}
has a full asymptotic expansion in powers of $h:=\mu^{-\frac{1}{\delta}},$
with coefficients are uniformly bounded on $x$ together with their derivatives.
This makes it possible to apply the results of \cite{DiZe03_01}
and \cite{DiZe10_01} (see also \cite{Di05_01}).

The main idea of the proof of (\ref{reduc1}) are the two following facts.

{\bf (1)} Since $W>0$ by assymptions \ref{mu1} and  \ref{mu2}, it follows that for all
$C>0$ there exists $\mu_C>>1$ such that for $\mu>\mu_C$ the set
$$
\big\{(x,\zeta)\in {\mathbb R}^{2n};\ \vert x\vert <C\ \hbox{\rm and} \
\zeta^2+V(x)+\mu W(x) \in [a,b]\big\}=\emptyset.
$$
Here $[a,b]$ is any bounded interval of $\mathbb R$. Thus, on the symbol level,
${\rm tr}\big(f(P_\mu)-f(P_0)\big)$ for $f\in {\mathcal C}^\infty_0({\mathbb R})$,
only depends on the asymptotic behavior of $W(x)$ at infinity.

{\bf (2)} On the other hand, by assumption \ref{mu2},
$\mu W(x)=\varphi(hx,h)$, where $\varphi(x,h)$ satisfies (\ref{asy}) with $h=\mu^{-\frac{1}{\delta}}$
and $\phi_j(x)=w_j(\frac{x}{\vert x\vert})\vert x\vert^{-\delta-j},\ j=1,2,\cdots $ for $|x|>>1$.

Armed with the above remarks, we will construct in the next subsection a semiclassical reference
operator $Q:=H(\mu^{-\frac{1}{\delta}})=P_0+\varphi(hx;h)$ with
$\varphi(x,h)$ satisfies (\ref{asy}) (see Lemma \ref{varphi}) and
satisfying the following crucial identity:
\begin{equation}\label{genidentity}
(z-P_\mu)^{-1}-(z-Q)^{-1}=A_1(z)+A_2(z)(z-Q)^{-1}+ (z-P_\mu)^{-1}A_3(z)(z-Q)^{-1},
\end{equation}
where $z\rightarrow A_i(z),\ (i=1,2,3)$ is analytic in a complex
neighborhood ${\mathcal U}$ of $[a,b]$. Moreover, $A_2(z)$ and
$A_3(z)$ are of trace class and for all $s,s'\in {\mathbb R}$, we
have
\begin{equation}\label{negli}
\Big\Vert \langle hx\rangle^s A_j(z) \langle hx\rangle^{s'}\Big\Vert_{\rm tr}
={\mathcal O}\big(\mu^{-\infty}\big),\quad j=2,3
\end{equation}
uniformly on $z\in {\mathcal U}$ (see Lemma
\ref{negligeabletraceoperator1} and Proposition
\ref{traceestimate}). We recall that $h=\mu^ {-\frac{1}{\delta}}$.

Using (\ref{genidentity}) and the Helffer-Sj\"ostrand formula we get (\ref{reduc1}) in the
sense of distributions for all $f\in {\mathcal C}^\infty_0(]a,b[)$. Thus,
Theorem \ref{w-SSFmu} follows from Theorem 1.3 in \cite{DiZe03_01}.

On the other hand, since $z\rightarrow A_i(z),\ (i=1,2,3)$ are analytic,
it follows from (\ref{genidentity}) and the stone's formula
(see also Proposition \ref{mainresultofthesubsection}) that
\begin{align}\label{mm}
& \xi'_\mu(\lambda)-\xi'(\lambda;Q,P_0)={\rm tr}\Big(A_2(\lambda) \big[(\lambda+i0-P_\mu)^{-1}
-(\lambda-i0-P_\mu)^{-1}\big]\Big)+\\
& {\rm tr}\Big(\big[(\lambda+i0-P_\mu)^{-1}-(\lambda-i0-P_\mu)^{-1}\big]A_3(\lambda)
\big[(\lambda+i0-Q)^{-1}-(\lambda-i0-Q)^{-1}\big]\Big)\nonumber\\
& =(A)+(B).\nonumber
\end{align}

Using the cyclicity of the trace, we obtain
\begin{equation}
(A)={\rm tr}\Big(\big[\langle hx\rangle^s A_2(\lambda) \langle hx\rangle^s\big]
\langle hx\rangle^{-s}\big[(\lambda+i0-P_\mu)^{-1}-(\lambda-i0-P_\mu)^{-1}\big]\langle hx\rangle^{-s}\Big).
\end{equation}
Using {\bf (*)}, we will prove that: for $s>\frac{1}{2}$
\begin{equation}\label{abs}
\big\Vert\langle hx\rangle^{-s}(\lambda\pm i0-Q)^{-1}\langle hx\rangle^{-s}\big\Vert={\mathcal O}(h^{-1}),\
\big\Vert\langle hx\rangle^{-s}(\lambda\pm i0-P_\mu)^{-1}\langle hx\rangle^{-s}\big\Vert={\mathcal O}(h^{-1}),
\end{equation}
uniformly on $\lambda\in [a,b]$. Recall that $h=\mu^{-\frac{1}{\delta}}$.
Combining (\ref{negli}) and (\ref{abs}) we get $(A)={\mathcal O}\big(\mu^{-\infty}\big)$ uniformly on
$\lambda\in [a,b]$. The same arguments give $(B)={\mathcal O}\big(\mu^{-\infty}\big)$.
Thus, it follows from (\ref{mm}) that (\ref{reduc1}) holds uniformly on $\lambda\in [a,b]$
and therefore Theorem \ref{dSSFmu} follows from \cite{DiZe10_01} (see also \cite[Theorem 3]{Di05_01}).

For completness, let us explain the proof of (\ref{abs}). Following \cite{GeMaSj91_01,GuRaTr88_01,HoSpTe01_01,Di93_01}
and the references given there, the spectral study of $(z-Q)$ for $z$ in a small complex neighborhood of
$\inf(\sigma(P_0))$ is reduced to the spectral study of an $h$-pseudodifferential operator
(called effective hamiltonian)
$$
E_{-+}(z,h)=z-\lambda_1(hD_x)-\phi_0(x)+{\mathcal O}(h),
$$
via the following well-known identity
$(z-Q)^{-1}=E(z)+E_+(z)E_{-+}(z,h)^{-1}E_-(z)$ where $E(z), E_+(z)$ and $E_-(z)$ are holomorphic
for $z$ in a small complex neighborhood of
$\inf(\sigma(P_0))$ (see \cite{SjZw07_01}, identity (3.15)).
Here $\phi_0$ is given in Lemma \ref{varphi}.

By construction of $\phi_0$ we have $\lambda_1(k)+\phi_0(x)\in [a,b]$ implies that
$\phi_0(x)=\omega_0(\frac{x}{\vert x\vert})\vert x\vert^{-\delta}$.
Combining this with {\bf (*)} we deduce that the interval $[a,b]$ is a non-trapping region
for the classical hamiltonian $p(k,x)=\lambda_1(k)+\phi_0(x)$.
Consequently, (\ref{abs}) follows from a standard limiting absorption principle (see \cite[Lemma 3.5]{RoTa88_01}
and also \cite{Ge08_01}).

\section{Proofs}\label{proofs}

\Subsection{Construction of semiclassical reference
operator $Q:=H(\mu^{-\frac{1}{\delta}})$}\label{semiRO}

In this subsection, we reduce the study of SSF
$\xi'_\mu(\cdot)$ to a semiclassical problem via the
following formula: For $f\in {\mathcal C}_0^\infty(\mathbb R),$
$$
{\rm tr}\big[f(P_\mu)-f(P_0)\big]={\rm tr}\big[f(Q)-f(P_0)\big]+{\mathcal O}(\mu^{-\infty}),\ \mu>>1,
$$
where $Q:=H(\mu^{-\frac{1}{\delta}})=P_0+\varphi(\mu^{-\frac{1}{\delta}}x,\mu^{-\frac{1}{\delta}})$ and
$\varphi(x,\mu^{-\frac{1}{\delta}})$ has a full asymptotic expansion on $\mu^{-\frac{1}{\delta}},$
with coefficients are uniformly bounded on $x$ together with their derivative.
Moreover $\varphi(\mu^{-\frac{1}{\delta}}x ,\mu^{-\frac{1}{\delta}})$ coincide with
$\mu W(x)$ outside $\Omega_M(\mu^{-\frac{1}{\delta}})=\{x\in \mathbb R^n;\ \mu W(x)\geq M\}$ for $M$
sufficiently large.

Set $h=\mu^{-\frac{1}{\delta}}.$ For $M>0,$ put $\Omega_M(h)=\{x\in \mathbb R^n;\ h^{-\delta}W(x)>M\}.$
Let $r_1, r_2$ two real satisfying $\displaystyle 0<r_1<\big(\min_{\mathbb S^{n-1}}w_0\big)^{\frac{1}{\delta}}
\leq \big(\max_{\mathbb S^{n-1}}w_0\big)^{\frac{1}{\delta}}<r_2.$ The two real $r_1, r_2$ exist since
$w_0>0$ and continuous on the unit sphere.

According to \ref{mu2} and \ref{mu3}, there exists $h_0>0$ such that
\begin{equation}
B\big(0,r_1M^{-\frac{1}{\delta}}h^{-1}\big)\subset \Omega_M(h)
\subset B\big(0,r_2M^{-\frac{1}{\delta}}h^{-1}\big),\quad \text{for all}\ \,
0<h\leq h_0.
\end{equation}
Here $B(0,r)$ denotes the ball of center 0 and radius $r$.

Let  $\chi\in {\mathcal C}_0^\infty\Big(B\big(0,r_1M^{-\frac{1}
{\delta}}\big);[0,1]\Big);\ \equiv 1$ near zero. We introduce the following quantities:
\begin{itemize}
 \item[(i)] $\displaystyle \varphi(x,h)=\big[1-\chi(x)\big]
h^{-\delta}W(\frac{x}{h})+M\chi(x),$

\item[(ii)] $\displaystyle \widetilde W(x)=h^{-\delta}W(x)-
\varphi(hx,h)=\chi(hx)\Big[h^{-\delta}W(x)-M\Big].$
\end{itemize}
By construction of $\varphi$ and $\widetilde W,$ we have:
\begin{lemma}\label{varphi}
The two functions $\varphi$ and $\widetilde W$ are ${\mathcal C}^\infty,$ and have the following properties:
$$
{\rm supp}(\widetilde W)\subset\ {\rm
supp}\big(\chi(h\cdot)\big)\subset\Omega_M(h),
$$
\begin{equation}\label{var-out}
\varphi(hx,h)>\frac{M}{2}\quad \text{for all}\ x\in
\Omega_{\frac{M}{2}}(h),
\end{equation}
\begin{equation}\label{var-est}
|\partial_x^\alpha\varphi(x,h)|\leq C_\alpha\quad \text{uniformly for}\ h\in
]0,h_0],
\end{equation}
and satisfies for all integer $N$, there exist
$\phi_0,\phi_1,\cdots,\phi_N, K_N(\cdot,h)\in C_b^\infty(\mathbb
R^n)$ uniformly bounded with respect to $h\in ]0,h_0]$ together with their derivatives
such that
$$
\varphi(x;h)=\sum_{j=0}^N\phi_j(x)h^j+h^{N+1}K_N(x,h).
$$
Moreover, the function $\phi_0$ is given by:
$$
\phi_0(x)=\big(1-\chi(x)\big)w_0\big(\frac{x}{|x|}\big)|x|^{-\delta}+M\chi(x).
$$
\end{lemma}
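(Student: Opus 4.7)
The plan is to derive every assertion by direct inspection of the two defining formulas
\[
\varphi(x,h) = (1-\chi(x))\, h^{-\delta}W(x/h) + M\chi(x),\qquad
\widetilde W(x) = \chi(hx)\bigl[h^{-\delta}W(x)-M\bigr],
\]
using (\ref{as1}) to control the rescaling $h^{-\delta}W(\cdot/h)$ and the sharper expansion \ref{mu2} to identify the $\phi_j$. Fix once and for all $r_0\in(0,r_1 M^{-1/\delta})$ with $\chi\equiv 1$ on $B(0,r_0)$, and take $h_0\leq r_0$. Smoothness of $\varphi(\cdot,h)$ and $\widetilde W$ is then immediate from that of $\chi$ and $W$.

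The support assertions and the lower bound (\ref{var-out}) cost almost nothing. The factor $\chi(hx)$ gives ${\rm supp}(\widetilde W) \subset {\rm supp}(\chi(h\cdot))$, and rescaling ${\rm supp}\,\chi\subset B(0,r_1 M^{-1/\delta})$ together with the displayed inclusion yields ${\rm supp}(\chi(h\cdot))\subset B(0,r_1 M^{-1/\delta}h^{-1})\subset\Omega_M(h)$; for (\ref{var-out}), one writes $\varphi(hx,h) = (1-\chi(hx))\, h^{-\delta}W(x) + \chi(hx)\, M$ as a convex combination of $h^{-\delta}W(x)$ and $M$, both of which exceed $M/2$ when $x\in\Omega_{M/2}(h)$. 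For the derivative bound (\ref{var-est}) I would apply Leibniz to $(1-\chi(x))h^{-\delta}W(x/h)$ and use the scaling identity
\[
h^{-\delta-|\alpha|}\bigl|\partial^\alpha W(x/h)\bigr|\leq C_\alpha\, h^{-\delta-|\alpha|}(1+|x|/h)^{-\delta-|\alpha|} = C_\alpha (h+|x|)^{-\delta-|\alpha|},
\]
which is deduced from (\ref{as1}). In the term where no derivative falls on $1-\chi$, the factor $(1-\chi)(x)$ confines $x$ to $\{|x|\geq r_0\}$, so the right-hand side is bounded by $C_\alpha r_0^{-\delta-|\alpha|}$; in the terms where some derivatives hit $1-\chi$, the compactly supported factor $\partial^\beta\chi$ confines $x$ to the annulus $\{r_0\leq|x|\leq r_1 M^{-1/\delta}\}$ and the same estimate applies.

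The asymptotic expansion and the formula for $\phi_0$ come from inserting \ref{mu2} at $y=x/h$: since $h_0\leq r_0$, on ${\rm supp}(1-\chi)$ we have $|x/h|\geq r_0/h\geq 1$, so \ref{mu2} is applicable, and the identity $w_j(y/|y|)|y|^{-\delta-j} = h^{\delta+j}w_j(x/|x|)|x|^{-\delta-j}$ at $y=x/h$ converts the expansion of $W$ into
\[
(1-\chi(x))\,h^{-\delta}W(x/h) = \sum_{j=0}^N h^j\,(1-\chi(x))\,w_j\!\Bigl(\tfrac{x}{|x|}\Bigr)|x|^{-\delta-j} + h^{N+1} K_N(x,h),
\]
with $K_N(x,h):=(1-\chi(x))\, h^{-\delta-N-1}R_N(x/h)$. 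Setting $\phi_j(x)=(1-\chi(x))w_j(x/|x|)|x|^{-\delta-j}$ for $j\geq 1$ and $\phi_0(x)=(1-\chi(x))w_0(x/|x|)|x|^{-\delta}+M\chi(x)$ yields the announced decomposition and makes the formula for $\phi_0$ manifest. The main point I expect to need some care is checking that the $\phi_j$ and $K_N(\cdot,h)$ lie in $C_b^\infty(\mathbb R^n)$ uniformly in $h\in(0,h_0]$: for the $\phi_j$ the cut-off $(1-\chi)$ kills the origin singularity of $w_j(x/|x|)|x|^{-\delta-j}$ while the negative homogeneity ensures decay at infinity; for $K_N$ one repeats the Leibniz estimate above, replacing (\ref{as1}) by $|\partial^\alpha R_N(y)|\leq C_\alpha(1+|y|)^{-\delta-N-1-|\alpha|}$, and the resulting factor $(h+|x|)^{-\delta-N-1-|\alpha|}$ is bounded by $r_0^{-\delta-N-1-|\alpha|}$ on ${\rm supp}(1-\chi)$.
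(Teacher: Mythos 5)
Your proof is correct. The paper states Lemma \ref{varphi} without proof (``By construction of $\varphi$ and $\widetilde W,$ we have\dots''), so there is no argument in the text to compare against; what you have written is the natural and expected verification, and it is carried out carefully. The key points all check out: the support inclusion from the factor $\chi(h\cdot)$ and the definition of $\Omega_M(h)$; the lower bound from the convex-combination identity $\varphi(hx,h)=(1-\chi(hx))h^{-\delta}W(x)+\chi(hx)M$; the uniform derivative bound from the scaling identity $h^{-\delta-|\alpha|}|\partial^\alpha W(x/h)|\leq C_\alpha(h+|x|)^{-\delta-|\alpha|}$ combined with the observation that the factors $1-\chi$ and $\partial^\beta\chi$ force $|x|\geq r_0$; and the expansion by substituting assumption \ref{mu2} at $y=x/h$, legitimate because $h_0\leq r_0$ guarantees $|x/h|\geq 1$ on ${\rm supp}(1-\chi)$. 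Your explicit identification $\phi_j(x)=(1-\chi(x))w_j(x/|x|)|x|^{-\delta-j}$ for $j\geq1$ is consistent with the paper's later use ``$\phi_j(x)=w_j(\tfrac{x}{|x|})|x|^{-\delta-j}$ for $|x|\gg1$,'' and the remainder $K_N(x,h)=(1-\chi(x))h^{-\delta-N-1}R_N(x/h)$ is bounded in $C_b^\infty$ uniformly in $h$ by the same Leibniz/scaling estimate applied to $R_N$.
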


\begin{lemma}\label{for-phi0}
If $\phi_0(x)<M$ then $\phi_0(x)=w_0(\frac{x}{|x|})|x|^{-\delta}.$
\end{lemma}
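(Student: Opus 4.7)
The plan is a direct contrapositive (equivalently, case) argument using the explicit formula for $\phi_0$ provided by Lemma \ref{varphi}, together with the choice $r_1 < (\min_{\mathbb{S}^{n-1}} w_0)^{1/\delta}$ made in the construction of $\chi$.

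Recall that $\phi_0(x) = (1-\chi(x)) w_0(\tfrac{x}{|x|})|x|^{-\delta} + M\chi(x)$, with $\chi \in C_0^\infty(B(0,r_1 M^{-1/\delta});[0,1])$ and $\chi \equiv 1$ near $0$. If $\chi(x)=0$, the formula gives $\phi_0(x)=w_0(\tfrac{x}{|x|})|x|^{-\delta}$ and the conclusion holds. So it suffices to show that $\chi(x) > 0$ forces $\phi_0(x) \geq M$; contrapositively, $\phi_0(x) < M$ implies $\chi(x)=0$.

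Assume then that $\chi(x) > 0$. Since $\mathrm{supp}(\chi) \subset \overline{B(0,r_1M^{-1/\delta})}$, we have $|x| \leq r_1 M^{-1/\delta}$, i.e.\ $M \leq r_1^\delta |x|^{-\delta}$. The choice $r_1^\delta < \min_{\mathbb S^{n-1}} w_0 \leq w_0(\tfrac{x}{|x|})$ (valid because $w_0>0$ and continuous on the compact sphere, by \ref{mu3}) then yields the strict bound
\begin{equation*}
M \;\leq\; r_1^\delta |x|^{-\delta} \;<\; w_0\bigl(\tfrac{x}{|x|}\bigr)|x|^{-\delta}.
\end{equation*}
Writing $a := w_0(\tfrac{x}{|x|})|x|^{-\delta}$, we have $a > M$, and since $0 < \chi(x) \leq 1$,
\begin{equation*}
\phi_0(x) = (1-\chi(x))\,a + M\chi(x) = a - \chi(x)(a - M) \;\geq\; a - (a-M) \;=\; M.
\end{equation*}
This contradicts $\phi_0(x) < M$, so $\chi(x)=0$ as required.

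There is no real obstacle: the argument is purely a consequence of the quantitative choice of $r_1$ relative to $\min w_0$ and the localization of $\mathrm{supp}(\chi)$ in $B(0,r_1 M^{-1/\delta})$. The only point to be careful about is handling the boundary case $\chi(x)\in(0,1)$ separately from $\chi(x)=1$, but both are covered uniformly by the convex-combination estimate above.
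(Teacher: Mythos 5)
Your proof is correct and follows essentially the same route as the paper: use the explicit formula for $\phi_0$, observe that on $\operatorname{supp}\chi$ one has $w_0(\tfrac{x}{|x|})|x|^{-\delta} > r_1^\delta |x|^{-\delta} \geq M$ by the choice of $r_1$, and conclude via the convex-combination structure that $\phi_0(x) \geq M$ there, so $\phi_0(x)<M$ forces $\chi(x)=0$. Your version is in fact slightly more careful than the paper's, which asserts the strict bound $\phi_0(x)>M$ on all of $\operatorname{supp}\chi$ (false where $\chi(x)=1$, though immaterial for the lemma); your $\geq M$ estimate handles that boundary case cleanly.
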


In fact, if $x\in {\rm supp}\chi$ then $w_0(\frac{x}{|x|})|x|^{-\delta}>r_1^\delta |x|^{-\delta}>M,$
which implies that
$$
\phi_0(x)=\big(1-\chi(x)\big)w_0(\frac{x}{|x|})|x|^{-\delta}+M\chi(x)>M\quad \text{for all}\ x\in {\rm
supp}\chi.
$$
{\bf Choice of the constant $M$}: Let $I=[a,b]$ be a bounded interval of $\mathbb R.$
The first condition on $M,$ we choose $M>|a|+|b|$ such that $\lambda_p(k)+\varphi(x;h)\in I$ therefore
$\phi_0(x)<M$ for all $0<h\leq h_0.$

Let $\Theta\in {\mathcal C}^\infty\big(\mathbb R;[\frac{M}{3},+\infty[\big)$ satisfying
$\Theta(t)=t,$
for all $\displaystyle t\geq \frac{M}{2}.$ We define
$$
F_1(x;h)=\Theta\Big(\varphi(hx;h)\Big)\quad{\rm and}\quad
F_2(x;h)=\Theta\Big(h^{-\delta}W(x)\Big), \quad \text{for all}\ x\in \mathbb
R^n.
$$

Let ${\mathcal U}$ be a small complex neighborhood of $I.$
From now on, we choose $M$ large enough so that
$$
\sup_{x\in \mathbb R^n}\big[V(x)+F_i(x;h)-\Re(z)\big]\geq \frac{M}
{4}\quad \text{uniformly for}\ z\in{\mathcal U},\quad
i=1,2.
$$
This choice of $M$ implies that the function defined by
$z\mapsto\big(z-P_{F_i}\big)^{-1}$ is holomorphic
from ${\mathcal U}$ to ${\mathcal
L}\big(L^2(\mathbb R^n)\big),$ where $P_{F_i}=P_0+F_i,\ i=1,2.$
Moreover, it follows from \eqref{var-est} that
$\partial_x^\alpha F_i(x,h)={\mathcal O}_\alpha\big(h^{-\delta}\big).$

Finally \eqref{var-out} shows that
\begin{align}\label{disj-est}
{\rm dist}\Big({\rm supp}(\widetilde W),{\rm supp}\big[\varphi(hx,h)-F_1(x,h)\big]\Big)\geq \frac{a_1(M)}{h},&
\\
{\rm dist}\Big({\rm supp}(\widetilde W),{\rm supp}\big[h^{-\delta}W(x)-F_2(x,h)\big]\Big)
\geq \frac{a_2(M)}{h},&\nonumber
\end{align}
with $a_1(M), a_2(M)>0$ independent of $h.$

Let $Q:=H(\mu^{-\frac{1}{\delta}})=P_0+\varphi(hx;h).$ The operator $Q$ with domain $H^2(\mathbb
R^n)$ is self-adjoint.
\begin{proposition}\label{resolventidentity}
For all $z\in{\mathcal
U}\setminus\big[\sigma(P_\mu)\cup\sigma(Q)\big],$ we have
\begin{align}\label{reso-iden}
(z-P_\mu)^{-1}-(z-Q)^{-1} = &(z-P_{F_2})^{-1}\widetilde W(z-P_{F_1})^{-1}\\
&+ (z-P_{F_2})^{-1}\widetilde W(z-P_{F_1})^{-1}{\mathcal G}_1(z-Q)^{-1}\nonumber\\
&+ (z-P_\mu)^{-1}{\mathcal G}_2(z-P_{F_2})^{-1}\widetilde W(z-Q)^{-1},\nonumber
\end{align}
where ${\mathcal G}_1(\cdot;h)=\Phi\Big(\varphi\big(h(\cdot);h\big)\Big)$ and
${\mathcal G}_2(\cdot;h)=\Phi\Big(h^{-\delta} W(\cdot)\Big),$
with $\Phi(t)=t-\Theta(t),\ t\in \mathbb R.$
\end{proposition}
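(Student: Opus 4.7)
The plan is to derive the identity by applying the standard first-order resolvent identity three times, exploiting the algebraic relations among the various operators that follow immediately from the definitions. First I would record the elementary identities
\[
P_\mu - Q = h^{-\delta}W(x) - \varphi(hx,h) = \widetilde W,
\qquad
P_\mu - P_{F_2} = h^{-\delta}W(x) - \Theta\bigl(h^{-\delta}W(x)\bigr) = {\mathcal G}_2,
\]
\[
Q - P_{F_1} = \varphi(hx,h) - \Theta\bigl(\varphi(hx,h)\bigr) = {\mathcal G}_1,
\]
which are just the decomposition $t = \Theta(t) + \Phi(t)$ evaluated at $t = h^{-\delta}W(x)$ and $t = \varphi(hx,h)$ respectively. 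Before proceeding I would check that $(z-P_{F_1})^{-1}$ and $(z-P_{F_2})^{-1}$ are well-defined holomorphic operator-valued functions on $\mathcal U$: this was arranged by the choice of $M$ so large that $V + F_i - \Re(z) \geq M/4$ uniformly on $\mathcal U$, which places $\mathcal U$ in the resolvent set of both $P_{F_1}$ and $P_{F_2}$.

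Next I would apply the standard resolvent identity $(z-A)^{-1} - (z-B)^{-1} = (z-A)^{-1}(A-B)(z-B)^{-1}$ to the pair $(P_\mu,Q)$, obtaining
\[
(z-P_\mu)^{-1} - (z-Q)^{-1} = (z-P_\mu)^{-1}\widetilde W (z-Q)^{-1}.
\]
I would then resolve the left-hand factor against $P_{F_2}$ by writing
\[
(z-P_\mu)^{-1} = (z-P_{F_2})^{-1} + (z-P_\mu)^{-1}{\mathcal G}_2 (z-P_{F_2})^{-1},
\]
and symmetrically resolve the right-hand factor against $P_{F_1}$ by writing
\[
(z-Q)^{-1} = (z-P_{F_1})^{-1} + (z-P_{F_1})^{-1}{\mathcal G}_1 (z-Q)^{-1}.
\]

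Substituting the first of these two expansions into $(z-P_\mu)^{-1}\widetilde W (z-Q)^{-1}$ produces the third summand of \eqref{reso-iden} together with a remainder of the form $(z-P_{F_2})^{-1}\widetilde W (z-Q)^{-1}$; substituting the second expansion into that remainder yields exactly the first two summands of \eqref{reso-iden}. Collecting the three contributions gives the claimed identity. I do not anticipate a genuine obstacle here: the proof is purely algebraic bookkeeping built on three applications of the first resolvent formula, and the only point requiring attention is the domain issue for $P_{F_1}, P_{F_2}$, which is already taken care of by the choice of $M$ preceding the statement.
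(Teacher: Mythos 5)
Your proof is correct and follows exactly the same route as the paper: you start from the first resolvent identity $(z-P_\mu)^{-1}-(z-Q)^{-1}=(z-P_\mu)^{-1}\widetilde W(z-Q)^{-1}$, expand $(z-P_\mu)^{-1}$ against $P_{F_2}$ and $(z-Q)^{-1}$ against $P_{F_1}$, and substitute in the same order to collect the three summands. Your preliminary check that $\mathcal U$ lies in the resolvent set of $P_{F_1}$ and $P_{F_2}$ (via the choice of $M$) is sound and, while the paper treats it before the proposition rather than inside the proof, it does not constitute a different argument.
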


{\sl Proof.} From the resolvent equation, we have:
\begin{equation}\label{r1}
(z-P_\mu)^{-1}-(z-Q)^{-1}=(z-P_\mu)^{-1}\widetilde W(z-Q)^{-1},
\end{equation}
\begin{align}\label{r2}
(z-P_\mu)^{-1}
& =(z-P_{F_2})^{-1}+(z-P_\mu)^{-1}\Big[h^{-\delta}W(x)-F_2(x;h)\Big](z-P_{F_2})^{-1}\\
& =(z-P_{F_2})^{-1}+(z-P_\mu)^{-1}{\mathcal G}_2(z-P_{F_2})^{-1},\nonumber
\end{align}
and
\begin{align}\label{r3}
(z-Q)^{-1}
& =(z-P_{F_1})^{-1}+(z-Q)^{-1}\Big[\varphi(hx,h)-F_1(x;h)\Big](z-P_{F_1})^{-1}\\
& =(z-P_{F_1})^{-1}+(z-P_{F_1})^{-1}{\mathcal G}_1(z-Q)^{-1}.\nonumber
\end{align}

By inserting \eqref{r2} in the right-hand side of \eqref{r1}, we obtain
\begin{align}\label{r4}
(z-P_\mu)^{-1}-(z-Q)^{-1}
= &(z-P_{F_2})^{-1}\widetilde W(z-Q)^{-1}\\
& +(z-P_\mu)^{-1}{\mathcal G}_2(z-P_{F_2})^{-1}\widetilde W(z-Q)^{-1}.\nonumber
\end{align}
Now, we replace $(z-Q)^{-1}$
in the first term of the right-hand side of \eqref{r4} by \eqref{r3}, we get \eqref{reso-iden}.
\hfill$\square$

Note that the supports of ${\mathcal G}_1(\cdot;h)$ and ${\mathcal G}_2(\cdot;h)$ are contained in the complementary
of the set $\displaystyle \Omega_{\frac{M}{2}}(h).$

\begin{lemma}\label{negligeabletraceoperator1}
Let $\chi_1(\cdot,h), \chi_2(\cdot,h)$ be two $h-$dependent functions and ${\mathcal C}_b^\infty$
w.r.t $x$ in $\mathbb R^n.$ We assume that:\\
(i) the function $\chi_1$ is compactly supported, \\
(ii) there exists $C>0$ independent on $h$ such that ${\rm dist}\big({\rm supp}\chi_1,{\rm supp}\chi_2\big)\geq C>0,$\\
(iii) there exist $m_j$ such that for all $\alpha\in \mathbb N^n,$ $\partial_x^\alpha\chi_j(x,h)={\mathcal O}_\alpha\big(h^{-m_j}\big),$
$(j=1,2)$.\\
Then, for all two reals $s, s',$ the operator
$K_i:=\langle hx\rangle^s\big[\chi_1(hx,h)(z-P_{F_i})^{-1}\chi_2(hx,h)\big]
\langle hx\rangle^{s'}$ is of
trace class and it's trace norm is ${\mathcal O}(h^\infty)$
uniformly on $z\in {\mathcal U}.$ Here $i=1,2.$
\end{lemma}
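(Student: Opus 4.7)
The plan is to combine a Combes--Thomas-type exponential decay estimate for $R := (z-P_{F_i})^{-1}$ with the standard smoothing action of $R$ away from the diagonal, and then invoke a kernel-based sufficient condition for trace class.

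By the choice of $M$ made at the beginning of Subsection~\ref{semiRO}, the potential of $P_{F_i}$ satisfies $V(x) + F_i(x, h) - \mathrm{Re}\,z \geq M/4$ uniformly in $z \in \mathcal{U}$ and $h \in (0, h_0]$, so $d_0 := \mathrm{dist}(\mathcal{U}, \sigma(P_{F_i})) > 0$ uniformly in $h$. A standard Combes--Thomas argument then produces $\eta_0 > 0$ such that for every Lipschitz function $\phi: \mathbb R^n \to \mathbb R$ with $\|\nabla \phi\|_\infty \leq \eta_0$,
\[
\|e^{\phi}\,(z - P_{F_i})^{-1}\,e^{-\phi}\|_{\mathcal{L}(L^2)} \leq C_0,
\]
uniformly in $z \in \mathcal{U}$, $\phi$, and $h \in (0, h_0]$. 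Bootstrapping this estimate through the elliptic regularity of $P_{F_i}$ and the Sobolev embedding yields pointwise exponential decay of the Schwartz kernel $G(\cdot, \cdot; z, h)$ of $R$ together with all its derivatives:
\[
|\partial_x^\alpha \partial_y^\beta G(x, y; z, h)| \leq C_{\alpha, \beta}\,e^{-\eta_0 |x - y|} \qquad \text{for } |x - y| \geq 1,
\]
uniformly in $z \in \mathcal{U}$ and $h \in (0, h_0]$.

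The kernel of $K_i$ reads
\[
K_i(x, y) = \langle hx\rangle^s\,\chi_1(hx, h)\,G(x, y; z, h)\,\chi_2(hy, h)\,\langle hy\rangle^{s'}.
\]
By hypothesis (i), this kernel is supported in $\{|x| \leq R/h\}$; by hypothesis (ii), it vanishes unless $|x - y| \geq C/h$. Combining the kernel estimate above with the bounds from (iii) and exploiting that $e^{-\eta_0|x-y|/2}$ dominates the polynomial-in-$\langle hy\rangle$ growth of $\chi_2(hy,h)\langle hy\rangle^{s'}$, one obtains, for every multi-index $(\alpha, \beta)$,
\[
|\partial_x^\alpha \partial_y^\beta K_i(x, y)| \leq C_{\alpha, \beta}\,h^{-N_{\alpha, \beta}}\,e^{-\eta_0 C/(2h)}\,e^{-\eta_0 |x-y|/4}\,\mathbf{1}_{|x| \leq R/h},
\]
uniformly in $z \in \mathcal{U}$.

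Finally, a standard sufficient condition for trace class (any integral operator whose Schwartz kernel has Schwartz seminorms of size $\leq\varepsilon$ has correspondingly small trace norm) applied to the weighted derivatives of $K_i$ yields $\|K_i\|_{\mathrm{tr}} = \mathcal{O}(h^\infty)$ uniformly in $z \in \mathcal{U}$, as required. The main technical subtlety is the derivation of the uniform pointwise kernel estimates; this is routine via iterated elliptic regularity combined with the base Combes--Thomas bound, but some care is needed to keep the constants uniform in both $z \in \mathcal{U}$ and $h \in (0, h_0]$.
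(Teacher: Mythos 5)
Your proof matches the paper's approach: both rest on exponential off-diagonal decay of the resolvent kernel of $P_{F_i}$ (which the paper cites from Proposition~3.3 of \cite{Di94_01}, and which you rederive via Combes--Thomas plus elliptic bootstrap), the separation ${\rm dist}\big({\rm supp}\,\chi_1(h\cdot,h),{\rm supp}\,\chi_2(h\cdot,h)\big)\geq C/h,$ and a kernel/$L^1$-of-derivatives trace-class criterion. One small imprecision: since $\partial_x^\alpha F_i={\mathcal O}_\alpha(h^{-\delta}),$ the constants $C_{\alpha,\beta}$ in your intermediate pointwise bound on $\partial_x^\alpha\partial_y^\beta G$ cannot actually be taken $h$-independent but only polynomially large in $h^{-1}$; this is harmless since it is swallowed by the $e^{-\eta_0 C/(2h)}$ gain and your final displayed estimate already carries the compensating $h^{-N_{\alpha,\beta}},$ but attributing that factor solely to the bounds in (iii) is slightly off.
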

Recall that $a(x)={\mathcal O}\big(\langle x\rangle^{-\infty}\big)$
means that for all integer $N$ there exist $C_N>0$ such that
$\big|a(x)\big|\leq C_N\big(\langle x\rangle^{-N}\big)$
and ${\mathcal C}_b^\infty$ is the space of ${\mathcal C}^\infty$ functions on $\mathbb R^n$ that are
uniformly bounded with respect to $h\in ]0,h_0]$ together with all their derivatives.

{\sl Proof.} Following Proposition 3.3 in \cite{Di94_01} and remembering $(i), (ii),$ the integral kernel
$k_i(x,y,z;h)$ of $K_i:=\langle hx\rangle^s\big[\chi_1(hx,h)(z-P_{F_i})^{-1}\chi_2(hx,h)\big]\langle hx\rangle^{s'},$
$(i=1,2)$, satisfies: for all $\alpha, \beta\in \mathbb N^n,$
$$
\partial_x^{\alpha}\partial_y^{\beta}\big[k_i(x,y,z;h)\big]={\mathcal O}_{\alpha,\beta}\left(h^\infty
\exp\Big[-\frac{1}{C}\big(d(x,{\rm supp}\chi_1(h\cdot,h))+d(y,{\rm supp}\chi_2(h\cdot,h))\big)\Big]\right),
$$
where $C$ and ${\mathcal O}_{\alpha,\beta}$ are independent of $(z,h)\in {\mathcal U}\times ]0,h_0]$ for some $h_0>0$ small.

Now, Lemma \ref{negligeabletraceoperator1} follows from the classical results on trace class operators
(see for instance \cite[Chapter 9]{DiSj99_01})
and the trace norm of $K_i$ can be estimated by
$$
\Vert K_i\Vert_{\rm tr}\leq c_n\sum_{|\alpha|+|\beta|\leq 2n+1}
\Vert \partial_x^\alpha\partial_y^\beta k_i(x,y,z;h)\Vert_{L^1(\mathbb R^{2n})},
$$
with a constant $c_n$ depending only on $n.$\hfill$\square$

For $z\in {\mathcal U},$ $\Im(z)\not=0,$ put
\begin{equation}\label{reste}
{\mathcal Q}(z)=(z-P_\mu)^{-1}-(z-Q)^{-1}-(z-P_{F_2})^{-1}\widetilde
W(z-P_{F_1})^{-1}.
\end{equation}

\begin{proposition}\label{traceestimate}
The operator ${\mathcal Q}(z)$ is of trace class and satisfies the
following estimate:
\begin{equation}\label{f41}
\Vert {\mathcal Q}(z)\Vert_{\rm tr}={\mathcal O}\big(h^\infty|\Im(z)|^{-2}\big),
\end{equation}
uniformly on $z\in {\mathcal U},\ \Im(z)\not=0.$
\end{proposition}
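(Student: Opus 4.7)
Starting from the resolvent identity \eqref{reso-iden}, subtracting the first summand on its right-hand side from both sides gives
\begin{equation*}
{\mathcal Q}(z)={\mathcal Q}_1(z)+{\mathcal Q}_2(z),
\end{equation*}
where
\begin{equation*}
{\mathcal Q}_1(z)=(z-P_{F_2})^{-1}\widetilde W(z-P_{F_1})^{-1}{\mathcal G}_1(z-Q)^{-1},
\end{equation*}
\begin{equation*}
{\mathcal Q}_2(z)=(z-P_\mu)^{-1}{\mathcal G}_2(z-P_{F_2})^{-1}\widetilde W(z-Q)^{-1}.
\end{equation*}
The plan is to control the trace norm of each piece separately and add the results.

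The inner sandwich of each ${\mathcal Q}_j$ has the form $\chi_1(z-P_{F_i})^{-1}\chi_2$, with the cutoff pair $(\widetilde W,{\mathcal G}_1)$ in ${\mathcal Q}_1$ and $({\mathcal G}_2,\widetilde W)$ in ${\mathcal Q}_2$. By \eqref{disj-est}, the supports in each pair are separated by a distance at least $a_j(M)/h$, equivalently by an $h$-independent distance once we pass to the rescaled variable $y=hx$. To detach this sandwich from the outer resolvents I would insert the identity $\mathbf{1}=\langle hx\rangle^{-s}\langle hx\rangle^s$ and write
\begin{equation*}
{\mathcal Q}_1(z)=\bigl[(z-P_{F_2})^{-1}\langle hx\rangle^{-s}\bigr]\,\bigl[\langle hx\rangle^s\widetilde W(z-P_{F_1})^{-1}{\mathcal G}_1\langle hx\rangle^s\bigr]\,\bigl[\langle hx\rangle^{-s}(z-Q)^{-1}\bigr],
\end{equation*}
and analogously for ${\mathcal Q}_2$. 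Lemma \ref{negligeabletraceoperator1} applies directly to the middle bracket and provides a trace-norm estimate ${\mathcal O}(h^\infty)$ uniformly for $z\in{\mathcal U}$.

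The outer brackets are then straightforward. Since $\langle hx\rangle^{-s}$ is bounded (for $s\geq 0$) and $(z-P_{F_2})^{-1}$ is holomorphic on the real neighborhood ${\mathcal U}$ by our choice of $M$, the product $(z-P_{F_2})^{-1}\langle hx\rangle^{-s}$ is uniformly bounded on ${\mathcal U}$. In contrast, $\langle hx\rangle^{-s}(z-Q)^{-1}$ and $(z-P_\mu)^{-1}\langle hx\rangle^{-s}$ merely satisfy $\|\cdot\|\leq|\Im z|^{-1}$ by self-adjointness of $Q$ and $P_\mu$. Combining these ingredients gives $\|{\mathcal Q}_1(z)\|_{\rm tr}={\mathcal O}(h^\infty|\Im z|^{-1})$ (one self-adjoint resolvent in play) and $\|{\mathcal Q}_2(z)\|_{\rm tr}={\mathcal O}(h^\infty|\Im z|^{-2})$ (two of them). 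The worse bound from ${\mathcal Q}_2$ dominates and yields \eqref{f41}.

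The only point requiring some care, and effectively the sole obstacle, is verifying that Lemma \ref{negligeabletraceoperator1} does apply to the middle brackets: one must recognize $\widetilde W$ and ${\mathcal G}_j$ as $h$-dependent cutoffs of the form $\chi_j(hx,h)$, check that the rescaled $\widetilde W(y/h)$ is compactly supported in $y$, that the rescaled support of ${\mathcal G}_j$ sits off the support of $\widetilde W$ by the uniform distance arising from \eqref{disj-est}, and that all derivatives of both cutoffs grow at worst polynomially in $h^{-1}$ (the dominant contribution coming from $h^{-\delta}W$). Once this identification is made, the hypotheses of the lemma are met and the remainder of the argument is bookkeeping of operator norms.
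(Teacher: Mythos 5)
Your proposal is correct and follows essentially the same route as the paper: split ${\mathcal Q}(z)$ into the two terms coming from \eqref{reso-iden}, invoke Lemma \ref{negligeabletraceoperator1} on the sandwich $\chi_1(z-P_{F_i})^{-1}\chi_2$ with cutoff pairs $(\widetilde W,{\mathcal G}_1)$ and $({\mathcal G}_2,\widetilde W)$, and close the estimate with $\Vert(z-Q)^{-1}\Vert,\Vert(z-P_\mu)^{-1}\Vert={\mathcal O}(|\Im z|^{-1})$. The only cosmetic difference is that you insert the weights $\langle hx\rangle^{\pm s}$ and note the sharper $O(h^\infty|\Im z|^{-1})$ bound for ${\mathcal Q}_1$ (since $(z-P_{F_2})^{-1}$ is $O(1)$); neither refinement is needed here since Lemma \ref{negligeabletraceoperator1} already applies with $s=s'=0$ and ${\mathcal Q}_2$ alone forces the stated $|\Im z|^{-2}$.
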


{\sl Proof.} Identity \eqref{reso-iden} gives
$$
{\mathcal Q}(z)=(z-P_{F_2})^{-1}\widetilde W(z-P_{F_1})^{-1}{\mathcal G}_1(z-Q)^{-1}
+(z-P_\mu)^{-1}{\mathcal G}_2(z-P_{F_2})^{-1}\widetilde W(z-Q)^{-1}.
$$
Applying the previous lemma and the fact that
$\Vert (z-Q)^{-1}\Vert, \Vert (z-P_\mu)^{-1}\Vert= {\mathcal O}(|\Im(z)|^{-1}),$
we get the proposition \ref{traceestimate}.
\hfill$\square$

Now we are ready to state the main result of this subsection. Let $J$
an open interval of $\mathbb R$ such that $I\subset\subset
J\subset\subset{\mathcal U}\cap\mathbb R.$

\begin{proposition}\label{mainresultofthesubsection}
The operator
$\big[f(P_\mu)-f(Q)\big]$ is of trace class with
${\mathcal O}\big(h^\infty\big)$ as trace norm, uniformly on $f\in
{\mathcal C}_0^\infty\big(J;\mathbb R\big);\ \equiv 1$ near of $\overline{I}.$ Moreover,
\begin{equation}\label{f42}
{\rm tr}\big[f(P_\mu)-f(Q)\big]=\lim_{\epsilon\downarrow 0}
\frac{i}{2\pi}\int_{\mathbb R} f(\lambda)
\Big[{\rm tr}\big({\mathcal Q}(\lambda+i\epsilon)\big)-
{\rm tr}\big({\mathcal Q}(\lambda-i\epsilon)\big)\Big]
\,d\lambda,
\end{equation}
where the limit is taken in the sense of distributions.
\end{proposition}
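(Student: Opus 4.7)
The plan is to derive both assertions from the Helffer--Sj\"ostrand functional calculus applied to the decomposition \eqref{reso-iden}, together with the trace-class bound on $\mathcal{Q}(z)$ from Proposition~\ref{traceestimate}. I would start by fixing an almost analytic extension $\tilde f\in\mathcal{C}_0^\infty(\mathbb{C})$ of $f$ supported in a small complex neighborhood of ${\rm supp}(f)\subset J$ lying inside $\mathcal{U}$, so that $\bar{\partial}\tilde{f}(z)=\mathcal{O}(|\Im z|^N)$ for every $N$. Applying the Helffer--Sj\"ostrand formula to each of $P_\mu$ and $Q$ and then inserting \eqref{reso-iden} and \eqref{reste} splits $f(P_\mu)-f(Q)$ into two integrals:
$$
f(P_\mu)-f(Q) = -\frac{1}{\pi}\int_{\mathbb C}\bar\partial\tilde f(z)\,B(z)\,L(dz) \;-\; \frac{1}{\pi}\int_{\mathbb C}\bar\partial\tilde f(z)\,\mathcal{Q}(z)\,L(dz),
$$
where $B(z):=(z-P_{F_2})^{-1}\widetilde W(z-P_{F_1})^{-1}$.

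By the choice of $M$ in Subsection~\ref{semiRO}, neither $P_{F_1}$ nor $P_{F_2}$ has spectrum in $\mathcal{U}$, so $B(z)$ is a holomorphic bounded-operator-valued function on $\mathcal{U}\supset{\rm supp}\,\tilde f$; applying Stokes' theorem (tested against arbitrary vectors) to the $(1,0)$-form $\tilde f(z)B(z)\,dz$ kills the first integral. For the second, the bound $\Vert\mathcal{Q}(z)\Vert_{\rm tr}=\mathcal{O}(h^\infty|\Im z|^{-2})$ combined with $|\bar\partial\tilde f(z)|=\mathcal{O}(|\Im z|^2)$ makes the integrand Bochner-integrable in the trace norm and yields
$$
\Vert f(P_\mu)-f(Q)\Vert_{\rm tr} \leq \frac{1}{\pi}\int|\bar\partial\tilde f(z)|\,\Vert\mathcal{Q}(z)\Vert_{\rm tr}\,L(dz) = \mathcal{O}(h^\infty),
$$
giving simultaneously the trace-class property and the $\mathcal{O}(h^\infty)$ estimate. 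Uniformity in $f$ on the specified family reduces to noting that only a finite number of seminorms of $f$ enter the construction of $\tilde f$ and that these are controlled on this family.

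For \eqref{f42}, the trace-norm convergence of the surviving integral lets me commute trace and integration, so that ${\rm tr}[f(P_\mu)-f(Q)]=-\frac{1}{\pi}\int\bar\partial\tilde f(z)\,{\rm tr}(\mathcal{Q}(z))\,L(dz)$. The scalar function $z\mapsto{\rm tr}(\mathcal{Q}(z))$ is holomorphic on $\mathbb{C}\setminus\mathbb{R}$, so excising the strip $|\Im z|<\epsilon$ and applying Stokes' theorem on each resulting half-plane to the $(1,0)$-form $\tilde f(z)\,{\rm tr}(\mathcal{Q}(z))\,dz$ converts the area integral into boundary integrals along $\{\Im z=\pm\epsilon\}$; letting $\epsilon\downarrow 0$ and using $\tilde f(\lambda\pm i\epsilon)\to f(\lambda)$ produces the right-hand side of \eqref{f42} up to the standard factor $\pm\frac{i}{2\pi}$ coming from $dz\wedge d\bar z$. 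The main subtlety — and the reason the identity is stated in the distributional sense — is that the boundary values ${\rm tr}(\mathcal{Q}(\lambda\pm i0))$ are not expected to exist pointwise; only their weak limits against $\mathcal{C}_0^\infty$ test functions are meaningful, which is exactly the content of \eqref{f42}.
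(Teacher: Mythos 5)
Your proof follows essentially the same route as the paper's: the Helffer--Sj\"ostrand formula for $f(P_\mu)-f(Q)$, insertion of the decomposition \eqref{reso-iden}/\eqref{reste}, cancellation of the holomorphic term $(z-P_{F_2})^{-1}\widetilde W(z-P_{F_1})^{-1}$ via Stokes, trace-norm control of the remaining integral through Proposition~\ref{traceestimate} and the $\mathcal{O}(|\Im z|^N)$ decay of $\bar\partial\tilde f$, and finally a Green/Stokes argument on the two half-planes to produce \eqref{f42}. The only cosmetic difference is that the paper shifts the integrand by $\pm i\epsilon$ on each half-plane rather than excising the strip $|\Im z|<\epsilon$, which is an equivalent regularization; the argument and the constants are the same.
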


{\sl Proof.} Let $f\in {\mathcal C}_0^\infty\big(J;\mathbb R\big);\
\equiv 1$ near of $\overline{I}.$ Let $\tilde f\in {\mathcal
C}_0^\infty({\mathcal U})$ be an almost analytic extension of $f$,
i.e. $\tilde f_{|\mathbb R}=f$ and $\overline{\partial}\tilde
f(z)={\mathcal O}_N(|\Im(z)|^N)$ for all $N\in \mathbb N,$ here
$\overline{\partial}=\frac{\partial}{\partial \overline{z}}$. The
functional calculus due to Helffer-Sj\"ostrand (see for instance
\cite[Chapter 8]{DiSj99_01}) yields
$$
[f(P_\mu)-f(Q)]=-\frac{1}{\pi}\int\overline{\partial}\tilde f(z)
\Big((z-P_\mu)^{-1}-(z-Q)^{-1}\Big)\, L(dz),
$$
where $L(dz)=dxdy,$ $z=x+iy, (x,y)\in {\mathbb R}^2.$ Remembering
the definition of ${\mathcal Q}(z)$ with \eqref{reso-iden}, we
obtain
$$
[f(P_\mu)-f(Q)]=-\frac{1}{\pi}\int\overline{\partial}\tilde f(z)
(z-P_{F_2})^{-1}\widetilde W(z-P_{F_1})^{-1}\, L(dz)
-\frac{1}{\pi}\int\overline{\partial}\tilde f(z){\mathcal Q}(z)\, L(dz).
$$
Since $(z-P_{F_j})^{-1}, j=1,2,$ is holomorphic in a neighborhood of $\text{supp}(\tilde f),$
the first term in the r.h.s. of the previous identity vanishes. Consequently,
$$
[f(P_\mu)-f(Q)]=-\frac{1}{\pi}\int\overline{\partial}\tilde f(z){\mathcal Q}(z)\, L(dz).
$$
Combining this with Proposition \ref{traceestimate} and using \eqref{f41} and that
$\overline{\partial}\tilde f(z)={\mathcal O}\big(|\Im(z)|^2\big),$ we get
$\Vert f(P_\mu)-f(Q)\Vert_{\rm tr}={\mathcal O}\big(h^\infty\big)$ and
\begin{align}
& {\rm tr}\Big[f(P_\mu)-f(Q)\Big]=-\frac{1}{\pi}\int\overline{\partial}\tilde f(z){\rm tr}\big[{\mathcal Q}(z)\big]\, L(dz)
\nonumber\\
& =\lim_{\epsilon\searrow 0}\Big[-\frac{1}{\pi}\int_{\{\Im(z)>0\}}
\overline{\partial}\tilde f(z){\rm tr}\big[{\mathcal Q}(z+i\epsilon)\big]\, L(dz)-
\frac{1}{\pi}\int_{\{\Im(z)<0\}}
\overline{\partial}\tilde f(z){\rm tr}
\big[{\mathcal Q}(z-i\epsilon)\big]\, L(dz)
\Big].\nonumber
\end{align}

Using that ${\rm tr}\big[{\mathcal Q}(z+i\epsilon)\big]$ (resp. ${\rm tr}\big[{\mathcal Q}(z-i\epsilon)\big]$)
is holomorphic on $\{z\in {\mathcal U}, \Im(z)>0\}$ (resp. $\{z\in {\mathcal U}, \Im(z)<0\}$) and applying
Green formula, we have \eqref{f42}.
\hfill$\square$

\Subsection{Proof of the weak asymptotic expansion of $\xi'_\mu(\cdot)$}\label{pw-SSFmu}

Let $I$ be an interval of $\mathbb R$. Let $f\in {\mathcal C}_0^\infty(I),$ we have
$$
f(P_\mu)-f(P_0)=\Big[f(Q)-f(P_0)\Big]+\Big[f(P_\mu)-f(Q)\Big].
$$

Proposition \ref{mainresultofthesubsection} imply that
$\Big[f(P_\mu)-f(Q)\Big]$ is of trace class and $\big\Vert
[f(P_\mu)-f(Q)]\big\Vert_{\rm tr}={\mathcal O}(h^\infty).$ On the
other hand, applying Theorem 1.3 of \cite{DiZe03_01} to the operator
$Q$, with $h=\mu^{-\frac{1}{\delta}}\downarrow 0$ we obtain that
$\Big[f(Q)-f(P_0)\Big]$ is of trace class and
$$
{\rm tr}\Big[f(Q)-f(P_{0})\Big]\sim
h^{-n}\sum_{j=0}^{+\infty}a_{j}(f)h^{j},\quad \text{when}\
h\downarrow 0,
$$
with
\begin{equation}\nonumber
a_{0}(f)=(2\pi)^{-n}\sum_{p\geq 1}\int_{{\mathbb R}^n_x}\int_ {E^*}
\Big[f\big(\lambda_p(k)+\phi_0(x)\big)-
f\big(\lambda_p(k)\big)\Big]\,dk\,dx
\end{equation}
and
\begin{equation}\nonumber
a_{1}(f)=(2\pi)^{-n}\sum_{p\geq 1}\int_{{\mathbb R}^n_x}\int_ {E^*}
f'\Big(\lambda_p(k)+\phi_0(x)\Big)\phi_1(x)\,dk\,dx.
\end{equation}

Note that, the sum in these equalities is finite, since $\displaystyle \lim_{p\rightarrow +\infty}\lambda_p(k)=+\infty$
and $\phi_0$ is bounded.

The coefficient $a_j(f)$ is a finite sum of term of the form $\int\int c_l(x,k)f^{(l)}\big(b(x,k)\big)dxdk,$
where $c_k$ depends on $\phi_.$ and their derivatives and $b(x,k)\in\{\lambda_p(k),\lambda_p(k)+\phi_0(x)\},$
see \cite[Chapter 8, Identity (8.16)]{DiSj99_01}. Then
$$
{\rm tr}\Big(f(P_\mu)-f(P_0)\Big)={\rm tr}\Big(f(Q)-f(P_0)\Big)+{\mathcal O}\big(h^\infty\big)
$$
and recall that $h=\mu^{-\frac{1}{\delta}}\downarrow 0$, we get
$$
{\rm tr}\Big[f(P_\mu)-f(P_{0})\Big]\sim
\mu^{\frac{n}{\delta}}\sum_{j=0}^{+\infty}a_{j}(f)\mu^{-\frac{j}{\delta}},\quad \text{when}\
\mu\uparrow +\infty,
$$
with
\begin{equation}
a_{0}(f)=(2\pi)^{-n}\sum_{p\geq 1}\int_{{\mathbb R}^n_x}\int_ {E^*}
\Big[f\big(\lambda_p(k)+w_0(\frac{x}{|x|})|x|^{-\delta}\big)-
f\big(\lambda_p(k)\big)\Big]\,dk\,dx\nonumber
\end{equation}
and
\begin{equation}
a_{1}(f)=(2\pi)^{-n}\sum_{p\geq 1}\int_{{\mathbb R}^n_x}\int_ {E^*}
f'\Big(\lambda_p(k)+w_0(\frac{x}{|x|})|x|^{-\delta}\Big)w_1\Big(\frac{x}
{|x|}\Big)|x|^{-\delta-1}\,dk\,dx,\nonumber
\end{equation}
here we have used Lemma \ref{for-phi0} for the expression of $\phi_0(x)$.

If $\lambda$ is a non-critical energy of $P_0$ for all
$\lambda\in I.$  Then $d\big(\lambda_p(k)\big)\not=0$ and
$d\big(\lambda_p(k)+\phi_0(x)\big)\not=0$
for all $k\in F(\lambda).$ We recall that $F(\lambda)$ is the Fermi surface.
Therefore, $a_j(f)=-\langle \gamma_j(\cdot),f\rangle,$ for
all $f\in {\mathcal C}_0^\infty(I)$ and $\gamma_j(\lambda)$ are smooth
functions of $\lambda\in I,$ in particular,
$$
\gamma_0(\lambda)={\frac{\rm d}{{\rm d}\lambda}}\left[\int_{\mathbb
R^n_x}\Big\{\rho\big(\lambda\big)-\rho\Big(\lambda-w_0\big(\frac{x}
{|x|}\big)|x|^{-\delta}\Big)\Big\} \, dx\right]
$$
and
$$
\gamma_1(\lambda)={\frac{{\rm d}^2}{{\rm d}\lambda^2}}\left[
\int_{\mathbb R^n_x}\rho\Big(\lambda-w_0\big(\frac{x}
{|x|}\big)|x|^{-\delta}\Big)w_1\big(\frac{x}
{|x|}\big)|x|^{-\delta-1}\, dx\right],
$$
which complete the proof of the theorem \ref{w-SSFmu}.
\hfill$\square$

\Subsection{Proof of the pointwise asymptotic expansion of  $\xi'_\mu(\cdot)$}\label{p-dSSFmu}

Let $\lambda_0=\inf\sigma(P_0)\in \mathbb R.$ Let $\lambda_1(k)$ be
the first Floquet eigenvalue. As noticed in Subsection \ref{main}
there exists a bounded interval $[a,b]\subset\lambda_1({\mathbb
T}^{*})\subset \sigma(P_0)$ near $\lambda_0$ such that for all
$\lambda=\lambda_1(k)\in [a,b],$ $\lambda$ is a non-critical energy
of $P_0$ and satisfies:
$$
\Delta_{k}\lambda_1(k)>0,\quad \text{for all}\ \ k\in{F}(\lambda).
$$

From now on, we assume that $[a,b]$ satisfies the above properties.
Let $\mathcal U$ be a small complex neighborhood of $[a,b]$.
Put $\mathcal U_{\pm}=\mathcal U \cap \{z\in \mathbb C; \pm\Im(z)>0\}$.
Recall that $h=\mu^{-\frac{1}{\delta}}.$

\begin{proposition}[limiting absorption principle]\label{muabslimit2}
Assume \ref{mu1}, \ref{mu2} and
\ref{mu3}. For all $\alpha>l-\frac{1}{2}$ with $l\in {\mathbb N}^*,$ we have
\begin{equation}\label{abslimit2}
\big\Vert \langle hx\rangle^{-\alpha}(z-P_\mu)^{-l}\langle hx\rangle^{-\alpha} \big\Vert
={\mathcal O}(h^{-l}),
\end{equation}
uniformly on $z\in \mathcal U_{\pm}$.
\end{proposition}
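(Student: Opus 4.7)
The plan is to establish the weighted resolvent bound first for the reference operator $Q = H(h)$ of Subsection \ref{semiRO} via a Grushin reduction to a scalar effective Hamiltonian and a non-trapping limiting absorption principle, and then transfer it to $P_\mu$ using the resolvent identity \eqref{reso-iden}.

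For $Q$, I would work in a small complex neighborhood $\mathcal U$ of $[a,b]$. Since $[a,b]$ lies near $\lambda_0 = \inf\sigma(P_0)$ where only the simple first band $\lambda_1(k)$ contributes, the semiclassical Grushin construction of \cite{GeMaSj91_01, HoSpTe01_01, SjZw07_01} yields holomorphic, uniformly $\mathcal O(1)$-bounded families $E(z), E_\pm(z)$ together with an effective $h$-pseudodifferential Hamiltonian
$$
E_{-+}(z,h) = z - \lambda_1(hD_x) - \phi_0(x) + h R(x, hD_x; h),
$$
for which $(z-Q)^{-1} = E(z) + E_+(z)\, E_{-+}(z,h)^{-1}\, E_-(z)$. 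The principal symbol is $p(x,k) = \lambda_1(k) + \phi_0(x)$. By Lemma \ref{for-phi0}, on the energy shell $p^{-1}([a,b])$ one has $\phi_0(x) = w_0(x/|x|)|x|^{-\delta}$, whence $\nabla_x\phi_0 = \mathcal O(|x|^{-\delta-1})$; combined with $\Delta_k\lambda_1 > 0$ on $F(\lambda)$ (which keeps $\nabla_k\lambda_1$ uniformly bounded below on the energy shell), every classical trajectory escapes to infinity, so $p$ is non-trapping over $[a,b]$. The standard semiclassical limiting absorption principle for non-trapping $h$-pseudodifferential operators \cite[Lemma~3.5]{RoTa88_01}, \cite{Ge08_01} then produces
$$
\bigl\| \langle hx\rangle^{-\alpha} E_{-+}(z,h)^{-l} \langle hx\rangle^{-\alpha}\bigr\| = \mathcal O(h^{-l}), \qquad \alpha > l - \tfrac{1}{2},
$$
uniformly for $z \in \mathcal U_\pm$. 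Composing with the Grushin identity yields the same bound for $(z-Q)^{-l}$.

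To transfer to $P_\mu$, I would combine the identity \eqref{reso-iden} with the observation that every error term is $\mathcal O(h^\infty)$ in weighted operator norm. The two correction terms containing $\mathcal G_1, \mathcal G_2$ enjoy the disjoint support property \eqref{disj-est} and fall under Lemma \ref{negligeabletraceoperator1}. For the leading pure term $(z-P_{F_2})^{-1}\tilde W(z-P_{F_1})^{-1}$, the choice of $M$ large ensures $F_i - \operatorname{Re} z \geq M/4$, so Agmon estimates yield exponentially decaying resolvent kernels with rate $\asymp \sqrt{M}$; integrating against $\tilde W$ (whose support has diameter $\mathcal O(1/h)$) produces an overall operator norm of order $e^{-c\sqrt{M}/h} = \mathcal O(h^\infty)$ after sandwiching by $\langle hx\rangle^{-\alpha}$. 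An absorption argument ($X \leq Y + \mathcal O(h^\infty)\cdot X$ with $X = \|\langle hx\rangle^{-\alpha}(z-P_\mu)^{-l}\langle hx\rangle^{-\alpha}\|$ and $Y = \mathcal O(h^{-l})$ the analogous bound for $Q$) then settles the case $l = 1$. For $l \geq 2$, I iterate the identity (or equivalently use $(z-P_\mu)^{-l} = \tfrac{1}{(l-1)!}\partial_z^{l-1}(z-P_\mu)^{-1}$) and reduce the estimate to the weighted bounds on $(z-Q)^{-j}$ for $1 \leq j \leq l$ already obtained.

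The hardest step will be the higher-power LAP for $E_{-+}(z,h)$ with the sharp weight exponent $\alpha > l - 1/2$: it requires a strict semiclassical Mourre estimate at every $\lambda \in [a,b]$, based on an escape function whose Poisson bracket with $p$ is positive on the energy shell. Non-trapping together with $\Delta_k\lambda_1 > 0$ on $F(\lambda)$ yields such an escape function in the exterior region; patching with a cutoff near the origin is harmless since, by the choice of $M$ large, the energy shell misses $\operatorname{supp}\chi$. Once the Mourre estimate is available, the Jensen--Mourre--Perry iteration machinery produces the sharp $h^{-l}$ bound.
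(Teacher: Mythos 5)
Your approach is essentially the paper's own: prove a limiting absorption principle for the reference operator $Q$ via the Grushin/effective-Hamiltonian reduction and a non-trapping argument, then transfer the estimate to $P_\mu$ through a weighted resolvent identity plus an absorption argument. The paper actually uses identity \eqref{r4} rather than the full \eqref{reso-iden}, which gives the cleaner weighted relation \eqref{weightre}
\[
(z-P_\mu)^{-1}_{\alpha}\Big[{\rm Id}-\big({\mathcal G}_2(z-P_{F_2})^{-1}\widetilde W\big)_{-\alpha}(z-Q)^{-1}_{\alpha}\Big]
=\big[{\rm Id}+\langle hx\rangle^{-\alpha}(z-P_{F_2})^{-1}\widetilde W\langle hx\rangle^{\alpha}\big](z-Q)^{-1}_{\alpha},
\]
with the bracket on the left invertible with $\mathcal O(1)$ inverse by Lemma \ref{negligeabletraceoperator1} and \eqref{habslimit}; this is the same absorption mechanism you describe.

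Two of your intermediate claims are, however, incorrect, even though the final conclusion survives them. First, the parenthetical ``$\Delta_k\lambda_1>0$ on $F(\lambda)$ keeps $\nabla_k\lambda_1$ uniformly bounded below on the energy shell'' is false: the shell $p^{-1}([a,b])$ contains points $(x,k_{\min})$ with $\nabla_k\lambda_1(k_{\min})=0$, since $\phi_0\geq 0$ allows $\lambda_1(k)$ to range all the way down to $\lambda_0$. The correct mechanism, which is exactly what the paper's escape function $\nabla\lambda_1(k)\cdot x$ exploits, is that at such points $\phi_0(x)=w_0(x/|x|)|x|^{-\delta}>0$, and then the term $\delta\,\phi_0(x)\,\Delta\lambda_1(k)$ in the Poisson bracket $\{\nabla\lambda_1(k)\cdot x,\,p\}=|\nabla\lambda_1(k)|^2+\delta\,w_0(x/|x|)|x|^{-\delta}\Delta\lambda_1(k)$ provides the positivity where $|\nabla\lambda_1|^2$ degenerates. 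Second, the pure term $(z-P_{F_2})^{-1}\widetilde W(z-P_{F_1})^{-1}$ is \emph{not} $\mathcal O(h^\infty)$ in weighted norm: the factors in this product do \emph{not} have disjoint supports, and the Agmon decay $e^{-c|x-y|}$ is of no help near the diagonal. That term is merely $\mathcal O(1)$ (the resolvents are holomorphic and bounded on $\mathcal U$ because $V+F_i-\Re z\geq M/4$, and $\widetilde W$ is bounded with compact support). Since $\mathcal O(1)\ll h^{-l}$, this does not spoil your absorption inequality, but the stated rate $e^{-c\sqrt M/h}$ is simply wrong, and for the record the paper does not even need a smallness claim for this term --- it is subsumed into the $\mathcal O(h^{-l})$ right-hand side exactly as the $\mathcal O(1)$ bound would be.
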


\begin{proof}  First assume that the operator $Q$ satisfies the limiting absorption principle, i.e.
for all $\alpha>l-\frac{1}{2}$ with $l\in {\mathbb N}^*,$
\begin{equation}\label{habslimit}
 \big\Vert \langle hx\rangle^{-\alpha}(z-Q)^{-l}\langle hx\rangle^{-\alpha} \big\Vert
={\mathcal O}(h^{-l}),
\end{equation}
uniformly on $z\in \mathcal U_{\pm}$.
Then we prove the estimation (\ref{abslimit2}). In fact,
it follows from identity \eqref{r4} that for all
$z\in{\mathcal U}\setminus\big[\sigma(Q)\cup\sigma(P_\mu)\big],$
$$
(z-P_\mu)^{-1}\big[{\rm Id}-{\mathcal G}_2(z-P_{F_2})^{-1}\widetilde W(z-Q)^{-1}\big]
=\big[{\rm Id}+(z-P_{F_2})^{-1}\widetilde W\big](z-Q)^{-1}.
$$
Then, the weighted resolvent $(z-P_\mu)^{-1}_\alpha:=\langle hx\rangle^{-\alpha}(z-P_\mu)^{-1}\langle hx\rangle^{-\alpha}$
satisfies the identity:
\begin{align}\label{weightre}
(z-P_\mu)^{-1}_{\alpha}\Big[{\rm Id}-\big({\mathcal G}_2(z-P_{F_2})^{-1}\widetilde W\big)_{-\alpha}&(z-Q)^{-1}_{\alpha}\Big]
=\\
& \big[{\rm Id}+\langle hx\rangle^{-\alpha}(z-P_{F_2})^{-1}\widetilde W\langle hx\rangle^{\alpha}\big](z-Q)^{-1}_{\alpha}.\nonumber
\end{align}
Here, we used the notation $A_\gamma:=<hx>^{-\gamma}A<hx>^{-\gamma}.$

Using the fact that $z\mapsto (z-P_{F_2})^{-1}$ is holomorphic, $\widetilde W $ is compactly supported and the estimation
\eqref{habslimit} for $(z-Q)^{-1}_\alpha$ we conclude that for all $\alpha>\frac{1}{2}$ the right hand side of \eqref{weightre} is
${\mathcal O}(h^{-1})$ uniformly on $z\in {\mathcal U}_{\pm}$ as a bounded operator from $L^2(\mathbb R^n)$ on itself.
Note that by classical result (see \cite{ReSi78_01}) the integral kernel $r_2(x,y,z;h)$ of $(z-P_{F_2})^{-1}$ satisfies:
for all $\beta,\tilde\beta\in \mathbb N^n,$
$$
\partial_x^{\beta}\partial_y^{\tilde\beta}r_2(x,y,z;h)={\mathcal O}_{\beta,\tilde \beta}(h^{-\delta})e^{-c|x-y|},\quad |x-y|>1,
$$
where $c$ and ${\mathcal O}_{\beta,\tilde \beta}$ are independent of $(z,h)\in {\mathcal U}\times ]0,h_0].$

On the other hand, Lemma \ref{negligeabletraceoperator1} yields that $\big({\mathcal G}_2(z-P_{F_2})^{-1}\widetilde W\big)_{-\alpha}$ is
${\mathcal O}(h^\infty)$ as a bounded operator from $L^2(\mathbb R^n)$ on itself uniformly on $z\in {\mathcal U}$. Then with \eqref{habslimit}
the operator $\Big[{\rm Id}-\big({\mathcal G}_2(z-P_{F_2})^{-1}\widetilde W\big)_{-\alpha}(z-Q)^{-1}_{\alpha}\Big]$ is invertible as a bounded
operator from $L^2(\mathbb R^n)$ on itself uniformly on $z\in {\mathcal U}_{\pm}$ for $h\in ]0,h_0], h_0>0$ small and his inverse is ${\mathcal O}(1)$
as a bounded operator from $L^2(\mathbb R^n)$ on itself uniformly on $z\in {\mathcal U}_{\pm}$ for $h\in ]0,h_0], h_0>0$.

Hence, for all $\alpha>\frac{1}{2}$
$(z-P_\mu)^{-1}_{\alpha}={\mathcal O}(h^{-1})$ as a bounded operator from $L^2(\mathbb R^n)$
on itself uniformly on $z\in {\mathcal U}_{\pm}$ for $h\in ]0,h_0], h_0>0$. The same argument work for $l\geq 2$.

It is well-known that by the method of effective hamiltonian spectral problems of $Q=H(h)$
can be reduced to similar problem of systems of $h$-pseudodifferential operators, (see \cite{GeMaSj91_01,Di93_01,DiZe03_01}).
In our situation the principal symbol $p$ of the $h$-pseudodifferential operators associated to $Q$
is given by $p(k,x):=\lambda_1(k)+w_0\big({\frac{x}{|x|}}\big)|x|^{-\delta}$ near $p^{-1}([a,b]).$
By observing that $x\cdot \nabla_x(w_0(\frac{x}{|x|}))=0,$ we get
$$
x\cdot \nabla_x(w_0(\frac{x}{|x|})|x|^{-\delta})=-\delta w_0(\frac{x}{|x|})|x|^{-\delta}.
$$
Then
$$
\{\nabla \lambda_1(k)\cdot x,p(k,x)\}=|\nabla \lambda_1(k)|^2+\delta w_0(\frac{x}{|x|})|x|^{-\delta}\Delta\lambda_1(k)\geq c_0>0
$$
in $p^{-1}([a,b])$, here $\{u,v\}:=\frac{\partial u}{\partial x}\frac{\partial v}{\partial k}-
\frac{\partial u}{\partial k}\frac{\partial v}{\partial x}$ is the poisson bracket between $u$ and $v$ and
$c_0$ is some positive constant.
This shows that every energy $\lambda\in [a,b]$
is non-trapping for the classical Hamiltonian $p(k,x)$ (see for instance \cite[Proposition 21.3]{HiSi96_01}).
Now, by Lemma 3.5 in \cite{RoTa88_01} we get the estimation (\ref{habslimit}).
For more details see \cite{DiZe10_01}.
\end{proof}

\begin{proposition}\label{traceunif}
One has ${\rm tr}\big[{\mathcal Q}(z)\big]={\mathcal
O}\big(h^\infty\big),$ uniformly for $z\in {\mathcal U}_\pm.$
\end{proposition}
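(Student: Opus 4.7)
The plan is to exploit the decomposition of $\mathcal Q(z)$ from \eqref{reste} and Proposition \ref{resolventidentity}, namely
\begin{equation*}
\mathcal Q(z) = A_2(z)(z-Q)^{-1} + (z-P_\mu)^{-1}A_3(z)(z-Q)^{-1},
\end{equation*}
where $A_2(z) := (z-P_{F_2})^{-1}\widetilde W(z-P_{F_1})^{-1}\mathcal G_1$ and $A_3(z) := \mathcal G_2(z-P_{F_2})^{-1}\widetilde W$. Since $\widetilde W$ is supported inside $\Omega_M(h)$ while $\mathcal G_1,\mathcal G_2$ vanish on $\Omega_{M/2}(h)$, with the two supports separated by a distance $\ge a_i(M)/h$ by \eqref{disj-est} (hence by a constant independent of $h$ after the change of variable $y=hx$), a direct application of Lemma \ref{negligeabletraceoperator1}, combined with the fact that $(z-P_{F_1})^{-1}$ and $(z-P_{F_2})^{-1}$ are uniformly bounded holomorphic on $\mathcal U$, yields
\begin{equation*}
\bigl\|\langle hx\rangle^{s} A_j(z)\langle hx\rangle^{s'}\bigr\|_{\mathrm{tr}} = \mathcal O(h^\infty), \quad j=2,3,
\end{equation*}
uniformly in $z\in\mathcal U$ and for any $s,s'\in\mathbb R$.

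For the first summand, the plan is to use cyclicity of the trace to write, for any fixed $\alpha>1/2$,
\begin{equation*}
\operatorname{tr}\bigl(A_2(z)(z-Q)^{-1}\bigr) = \operatorname{tr}\Bigl(\bigl[\langle hx\rangle^{-\alpha}(z-Q)^{-1}\langle hx\rangle^{-\alpha}\bigr]\cdot\bigl[\langle hx\rangle^{\alpha}A_2(z)\langle hx\rangle^{\alpha}\bigr]\Bigr).
\end{equation*}
By the limiting absorption principle for $Q$ established in the proof of Proposition \ref{muabslimit2} (estimate \eqref{habslimit}), the first bracket has operator norm $\mathcal O(h^{-1})$ uniformly in $z\in\mathcal U_\pm$, while the second has trace norm $\mathcal O(h^\infty)$. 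The basic inequality $|\operatorname{tr}(BT)|\le\|B\|\,\|T\|_{\mathrm{tr}}$ gives $\mathcal O(h^{-1})\cdot\mathcal O(h^\infty)=\mathcal O(h^\infty)$.

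For the second summand I cycle to $\operatorname{tr}\bigl(A_3(z)(z-Q)^{-1}(z-P_\mu)^{-1}\bigr)$, which the same weight shift bounds by $\|\langle hx\rangle^{\alpha}A_3(z)\langle hx\rangle^{\alpha}\|_{\mathrm{tr}}\cdot\|\langle hx\rangle^{-\alpha}(z-Q)^{-1}(z-P_\mu)^{-1}\langle hx\rangle^{-\alpha}\|$. The main obstacle is the latter operator norm: a product of two resolvents, at the same energy approaching the real axis, coming from two different operators — plain LAP handles each resolvent only with its own sandwich of weights. The key device is the identity $P_\mu-Q=\widetilde W$, which via the second resolvent identity gives
\begin{equation*}
(z-Q)^{-1}(z-P_\mu)^{-1} = (z-P_\mu)^{-2} - (z-Q)^{-1}\widetilde W\,(z-P_\mu)^{-2}.
\end{equation*}
For $\alpha>3/2$, Proposition \ref{muabslimit2} with $l=2$ gives $\|\langle hx\rangle^{-\alpha}(z-P_\mu)^{-2}\langle hx\rangle^{-\alpha}\|=\mathcal O(h^{-2})$. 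For the remainder, I insert $\langle hx\rangle^{\alpha}\langle hx\rangle^{-\alpha}$ on each side of $\widetilde W$: since $\widetilde W$ is compactly supported with $\|\widetilde W\|_\infty=\mathcal O(h^{-\delta})$ and $\langle hx\rangle\lesssim 1$ on its support, the middle factor $\langle hx\rangle^{\alpha}\widetilde W\langle hx\rangle^{\alpha}$ is $\mathcal O(h^{-\delta})$ in operator norm, and the two flanking weighted resolvents are $\mathcal O(h^{-1})$ and $\mathcal O(h^{-2})$ by \eqref{habslimit} and Proposition \ref{muabslimit2}. Altogether the second factor is $\mathcal O(h^{-N})$ for some fixed $N$ independent of $z\in\mathcal U_\pm$, and multiplying by $\mathcal O(h^\infty)$ from $A_3$ yields $\mathcal O(h^\infty)$. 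Combining the two summands proves the claim uniformly in $z\in\mathcal U_\pm$.
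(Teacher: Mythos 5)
Your proof is correct and follows the same architecture as the paper's: both split $\mathcal Q(z)$ into the two pieces $I_1=A_2(z)(z-Q)^{-1}$ and $I_2=(z-P_\mu)^{-1}A_3(z)(z-Q)^{-1}$ from \eqref{reso-iden}, and both bound the traces by the same two ingredients — Lemma \ref{negligeabletraceoperator1} to get $\mathcal O(h^\infty)$ trace norms for the weighted cores, and Proposition \ref{muabslimit2} to control the weighted resolvents at cost $\mathcal O(h^{-l})$. The difference is only in how the mixed term $I_2$, which contains both $(z-P_\mu)^{-1}$ and $(z-Q)^{-1}$, is untangled. You cycle the trace to bring the two resolvents adjacent and then apply the second resolvent identity on the right, writing $(z-Q)^{-1}(z-P_\mu)^{-1}=(z-P_\mu)^{-2}-(z-Q)^{-1}\widetilde W(z-P_\mu)^{-2}$; this requires the $l=2$ version of the LAP for $P_\mu$ and the $l=1$ version for $Q$. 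The paper instead expands the leftmost $(z-P_\mu)^{-1}$ via $(z-P_\mu)^{-1}=(z-Q)^{-1}+(z-Q)^{-1}\widetilde W(z-P_\mu)^{-1}$, which places the $l=2$ LAP on $Q$ and $l=1$ on $P_\mu$. The two are mirror images and buy the same thing; neither is shorter or more general. One small point you glossed over is the claim $\|\langle hx\rangle^s A_2(z)\langle hx\rangle^{s'}\|_{\rm tr}=\mathcal O(h^\infty)$: Lemma \ref{negligeabletraceoperator1} handles the middle block $\widetilde W(z-P_{F_1})^{-1}\mathcal G_1$, but you also need the boundedness of the conjugated resolvent $\langle hx\rangle^s(z-P_{F_2})^{-1}\langle hx\rangle^{-s}$ on the left, which the paper supplies via pseudodifferential calculus; make this factorization explicit.
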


\begin{proof}
Recall the expression of ${\mathcal Q}(z)$:
$$
{\mathcal Q}(z)=(z-P_\mu)^{-1}-(z-Q)^{-1}-(z-P_{F_2})^{-1}\widetilde W(z-P_{F_1})^{-1}.
$$
Following the proposition \ref{resolventidentity}, we have
$$
{\mathcal Q}(z)=(z-P_{F_2})^{-1}\widetilde W(z-P_{F_1})^{-1}{\mathcal G}_1(z-Q)^{-1}
+(z-P_\mu)^{-1}{\mathcal G}_2(z-P_{F_2})^{-1}\widetilde W(z-Q)^{-1}.
$$
Therefore ${\rm tr}\big[{\mathcal Q}(z)\big]={\rm tr}\big(I_1(z)\big)+{\rm tr}\big(I_2(z)\big),$ where
\begin{align}
& I_1(z)=(z-P_{F_2})^{-1}\widetilde W(z-P_{F_1})^{-1}{\mathcal G}_1(z-Q)^{-1}\\
\text{and} &{}\nonumber\\
& I_2(z)=(z-P_\mu)^{-1}{\mathcal G}_2(z-P_{F_2})^{-1}\widetilde W(z-Q)^{-1}.\nonumber
\end{align}

Using the cyclicity of the trace, we show that:
\begin{align}
& {\rm tr}\big(I_1(z)\big)={\rm tr}\Big[\langle hx\rangle^{-1}\langle hx\rangle(z-P_{F_2})^{-1}
\widetilde W(z-P_{F_1})^{-1}{\mathcal G}_1(z-Q)^{-1}\Big]\nonumber\\
& ={\rm tr}\Big[\langle hx\rangle(z-P_{F_2})^{-1}\langle hx\rangle^{-1}\cdot\langle hx\rangle
\big(\widetilde W(z-P_{F_1})^{-1}{\mathcal G}_1\big)\langle hx\rangle\cdot\langle hx\rangle^{-1}
(z-Q)^{-1}\langle hx\rangle^{-1}\Big],\nonumber
\end{align}
which implies that
\begin{align}\label{f43}
 \Big|{\rm tr}\big(I_1(z)\big)\Big|\leq& \big\Vert\langle hx\rangle(z-P_{F_2})^{-1}\langle hx\rangle^{-1}\big\Vert\times
 \big\Vert\langle hx\rangle
 \big(\widetilde W(z-P_{F_1})^{-1}{\mathcal G}_1\big)\langle hx\rangle\big\Vert_{\rm tr} \\
&\times\big\Vert\langle hx\rangle^{-1}(z-Q)^{-1}\langle hx\rangle^{-1}\big\Vert.\nonumber
\end{align}

Since $\displaystyle\text{dist}\big(\widetilde W, {\mathcal G}_1\big)\geq \frac{C(M)}{h}.$ Then,
uniformly for $z\in{\mathcal U}_\pm,$
$$
\Big\Vert\langle hx\rangle\big(\widetilde W(z-P_{F_1})^{-1}
{\mathcal G}_1\big)\langle hx\rangle\Big\Vert_{\rm tr}
={\mathcal O}\big(h^\infty\big),
$$
we use here the lemma \ref{negligeabletraceoperator1}. On the other
hand, the proposition \ref{muabslimit2} gives that the third term of
\eqref{f43} is ${\mathcal O}(h^{-1})$ uniformly on $z\in{\mathcal
U}_\pm$ and by pseudodifferential calculus we get 
$$
\Big\Vert\langle
hx\rangle(z-P_{F_2})^{-1}\langle hx\rangle^{-1}\Big\Vert={\mathcal
O}(1).
$$ 
Then we obtain ${\rm tr}\big(I_1(z)\big)={\mathcal O}\big(h^\infty\big)$
uniformly for $z\in{\mathcal U}_\pm.$

For ${\rm tr}\big(I_2(z)\big),$ we use again the resolvent identity:
$$
(z-P_\mu)^{-1}=(z-Q)^{-1}+(z-Q)^{-1}\widetilde W(z-P_\mu)^{-1}.
$$
Therefore ${\rm tr}\big(I_2(z)\big)={\rm tr}\big(I_{21}(z)\big)+{\rm tr}\big(I_{22}(z)\big),$ where
$$
I_{21}(z)=(z-Q)^{-1}{\mathcal G}_2(z-P_{F_2})^{-1}\widetilde W(z-Q)^{-1}
$$
and
$$
I_{22}(z)=(z-Q)^{-1}\widetilde W(z-P_\mu)^{-1}{\mathcal G}_2(z-P_{F_2})^{-1}
\widetilde W(z-Q)^{-1}.
$$
The two terms ${\rm tr}\big(I_{21}(z)\big), {\rm tr}\big(I_{22}(z)\big)$ are in some sense analogous then we discuss only the term
${\rm tr}\big(I_{22}(z)\big).$ For this, we use again the cyclicity of the trace and the fact that
the multiplication operator by
$\langle hx\rangle^{-2}$ is bounded, we obtain:
\begin{align}
{\rm tr}\big(I_{22}(z)\big)
& ={\rm tr}\Big[\langle hx\rangle^{2}\widetilde W \langle hx\rangle
\cdot\langle hx\rangle^{-1}(z-P_\mu)^{-1}\langle hx\rangle^{-1}\nonumber\\
& \cdot\Big(\langle hx\rangle{\mathcal G}_2(z-P_{F_2})^{-1}
\widetilde W\langle hx\rangle^{2}\Big)
\cdot\langle hx\rangle^{-2}(z-Q)^{-2}\langle hx\rangle^{-2}\Big].\nonumber
\end{align}
Since $\displaystyle{\rm dist}\big({\mathcal G}_2,\widetilde W\big)\geq \frac{C(M)}{h}.$
Then, uniformly for $z\in{\mathcal U}_\pm,$
$$
\Big\Vert\langle hx\rangle \big({\mathcal G}_2(z-P_{F_2})^{-1}
\widetilde W\big)\langle hx\rangle^{2}\Big\Vert_{\rm tr}={\mathcal O}\big(h^\infty\big).
$$
See Lemma \ref{negligeabletraceoperator1}. The proposition \ref{muabslimit2} gives the following estimations:
$$
\Big\Vert\langle hx\rangle^{-1}(z-P_\mu)^{-1}\langle hx\rangle^{-1}\Big\Vert={\mathcal O}\big(h^{-1}\big)
,\quad
\Big\Vert\langle hx\rangle^{-2}(z-Q)^{-2}\langle hx\rangle^{-2}\Big\Vert={\mathcal O}\big(h^{-2}\big).
$$
Using the definition of $\widetilde W,$
more precisely, the fact that his support
is in $B\big(0,r_1 M^{-{\frac{1}{\delta}}}\big),$
we get $\Vert\langle hx\rangle^{3}\widetilde W\Vert_\infty={\mathcal O}_M(1).$
Therefore ${\rm tr}\big(I_{22}(z)\big),$ (and in the same way ${\rm tr}\big(I_{21}(z)\big)$), is ${\mathcal
O}\big(h^\infty\big),$ uniformly for
$z\in {\mathcal U}_\pm,$ which finish the proof of Proposition \ref{traceunif}.
\end{proof}

\noindent
{\sl End of the proof of Theorem \ref{dSSFmu}}: Let $\xi_\mu(\lambda)$ (resp. $\xi_h(\lambda)$) be the spectral shift
function associated to the pair $(P_\mu,P_0)$
(resp. $(Q,P_0)$). An immediate consequence of the previous proposition
and Proposition \ref{mainresultofthesubsection}, identity \eqref{f42} is that:
$$
\xi'_\mu(\lambda)=\xi'_h(\lambda)+{\mathcal O}\big(h^\infty\big),
\ \textmd{uniformly for}\  \lambda\in[a,b],
$$
which together with Theorem 2.3 in \cite{DiZe10_01} imply \eqref{strongSSFmu}.

\end{document}